\documentclass[11pt]{article}

\usepackage[T1]{fontenc}
\usepackage{lmodern}
\usepackage{amsmath,amssymb,amsthm,mathtools}
\usepackage[a4paper,margin=1in]{geometry}
\usepackage{microtype}
\usepackage{booktabs,tabularx,array}
\usepackage[colorlinks=true,linkcolor=blue,citecolor=blue,urlcolor=blue]{hyperref}

\newtheorem{theorem}{Theorem}
\newtheorem{corollary}{Corollary}
\newtheorem{proposition}{Proposition}
\newtheorem{lemma}{Lemma}
\theoremstyle{definition}
\newtheorem{definition}{Definition}
\theoremstyle{remark}
\newtheorem{remark}{Remark}

\newcommand{\R}{\mathbb R}
\newcommand{\Z}{\mathbb Z}
\newcommand{\Q}{\mathbb Q}
\newcommand{\N}{\mathbb N}
\newcommand{\T}{\mathbb T}
\newcommand{\A}{\mathcal A}
\newcommand{\U}{U}
\newcommand{\Orb}{\mathcal O}
\newcommand{\sk}{\sigma_d^{>}}
\newcommand{\card}{\#}

\DeclareMathOperator{\dist}{dist}

\newcolumntype{Y}{>{\raggedright\arraybackslash}X}
\newcolumntype{L}[1]{>{\raggedright\arraybackslash}p{#1}}
\renewcommand{\arraystretch}{1.12}

\title{Kronecker sequences beyond the torus:\\
nearest-neighbour distances and best returns}
\author{Evgeniy Zorin\footnote{Department of Mathematics, University of York,
Heslington, York, YO10 5DD, England, \texttt{evgeniy.zorin@york.ac.uk}}}
\date{}

\begin{document}
\maketitle

\begin{abstract}
The classical three-gap theorem says that a finite Kronecker sequence on the circle has at most three gap lengths. We extend this phenomenon to nearest-neighbour distances on quotients $(V\times U)/\Lambda$, where $V$ is a finite-dimensional real normed space, $U$ is an arbitrary ultrametric abelian group and $\Lambda$ is closed. For the quotient metric induced by the maximum product metric, the
uniform distance bound is controlled entirely by the real factor. Continuous real directions contained in the subgroup can be factored out; for inner-product metrics, only the real directions generated by the projected subgroup matter. For the maximum norm on $\R^d$ the universal mixed bound is $2^d+1$. These results unify and extend bounds of Chevallier, Haynes--Ramirez, Das--Haynes and Shulga.

Best-return denominators behave differently. We show that a recurrence $q_{n+M}\ge q_n+q_{n+1}$ implies at most $M+1$ nearest-neighbour distances in any abelian group with a translation-invariant metric. Shulga's recurrence extends from compact real tori to arbitrary purely real quotients, with the index determined by the rank of the discrete part of the subgroup rather than by the ambient dimension. After adding an ultrametric factor, however, this recurrence can fail, even on an adelic solenoid. Nevertheless a packing recurrence survives for arbitrary mixed quotients, and in effective Euclidean dimensions one and two the one-step loss from the purely real recurrence is sharp.
\end{abstract}

\section{Introduction}\label{sec:introduction}

\subsection{From the three-gap theorem to mixed quotients}

The classical three-gap theorem asserts that a finite Kronecker sequence
on the circle
\[
 \T^1=\R/\Z,\qquad 0,\alpha,\ldots,(N-1)\alpha,
\]
cuts the circle into arcs of at most three different lengths; see
\cite{MarklofStrombergsson2017,Sos1958}. In particular, the distances from
these points to their nearest neighbours take at most three values.
This metric consequence makes sense without a cyclic order and provides
a natural starting point for higher-dimensional and non-Archimedean
extensions. Such extensions have received increasing attention in
recent years.

On real tori, Chevallier \cite{Chevallier1996,ChevallierApproximations1996}
related nearest-neighbour distances to best simultaneous approximations
and proved a five-distance bound for the maximum metric on $\R^2/\Z^2$.
Haynes and Ramirez \cite{HaynesRamirez2021} established the bound
$2^d+1$ for the maximum metric on $\R^d/\mathcal L$, where $\mathcal L$
is an arbitrary full-rank lattice, and proved optimality in dimensions
two and three. For Euclidean metrics, Haynes and Marklof
\cite{HaynesMarklof2022} proved the sharp planar five-distance theorem
and obtained higher-dimensional bounds in terms of kissing numbers.
Romanov's denominator estimates \cite{Romanov2006} give another route
to such bounds; the relation with Chevallier's work is developed
explicitly by Shutov \cite{Shutov2024}.

Shulga \cite{Shulga2026} recently proved $g_N\leq2^d+1$ for every
inner-product metric on a real flat $d$-torus. In dimension three this
settles the nine-distance conjecture of Haynes and Marklof; Dettmann's
example \cite{Dettmann2025} shows that nine is optimal. Shulga also
formulates the earlier angular argument with the strong kissing number,
retaining the strict angular separation in the proof. 
Although Shulga states his denominator-growth theorem for full-rank
lattices, we show below that its recurrence consequence extends to
arbitrary, possibly noncompact, purely real quotients. The exponent is
then governed by the rank of the discrete part of the closed subgroup,
rather than by the ambient dimension.

A different extension is furnished by Das and Haynes
\cite{DasHaynes2023}, who proved a sharp three-distance theorem for
adelic tori associated with any non-empty set of primes, finite or
infinite. Their spaces combine one real coordinate with an ultrametric
component. This suggests asking whether the real and adelic theories
can be treated together, with a bound controlled by the real component
alone.

We give a substantially stronger affirmative answer for
\[
 X=(V\times U)/\Lambda
\]
with a maximum product metric. The ultrametric component introduces no additional dimension into the
uniform distance bound, and in fact the ambient real space can also be
reduced. After factoring out the continuous real
part of $\Lambda$, the distance bound depends only on the resulting
reduced real factor. For inner-product metrics one can go further:
real directions not generated by the projected subgroup play no role,
and the bound is governed by an effective real dimension
$m_\Lambda$, which may be strictly smaller than $\dim V$.

This yields a single framework that simultaneously unifies and extends
the maximum-metric results of Chevallier and Haynes--Ramirez, the adelic theorem of Das--Haynes and
Shulga's strong-kissing-number bound. Neither compactness of $X$ nor an
arithmetic description of the ultrametric group or the subgroup is
required.

The second theme is the distinction between distance bounds and
denominator growth. For purely real inner-product quotients, a
projection argument shows that Shulga's recurrence persists without
any compactness assumption, with the relevant index determined by the
rank of the discrete part of the subgroup. The situation changes once
an ultrametric factor is present: the same recurrence can fail even
when the distance bound is unchanged. Nevertheless an effective
packing recurrence survives in arbitrary mixed quotients, and in
effective Euclidean dimensions one and two we obtain the sharp
one-index weakening of the purely real recurrence.

\subsection{The metric setting and uniform distance bounds}
\label{subsec:bounds}

Let $V$ be a real normed space of dimension $d\geq1$. Let $U$ be an
abelian group with a translation-invariant ultrametric, written
$d_U(u,w)=|u-w|_U$. Thus
\begin{equation}\label{eq:ultrametric}
 |u+w|_U\leq\max\{|u|_U,|w|_U\}.
\end{equation}
On $\A=V\times U$ put
\begin{equation}\label{eq:ambient}
 |(v,u)|_{\A}=\max\{\|v\|_V,|u|_U\}.
\end{equation}
For a closed additive subgroup $\Lambda\subset\A$, define
\begin{equation}\label{eq:quotient}
 X=\A/\Lambda,\qquad
 \|a+\Lambda\|_X=\inf_{\lambda\in\Lambda}|a-\lambda|_{\A},
 \qquad d_X(x,y)=\|x-y\|_X.
\end{equation}
Closedness ensures that this is a metric. Neither compactness of $X$
nor discreteness of $\Lambda$ is assumed. In particular, the infimum
in \eqref{eq:quotient} need not be attained. The distance bounds and
unconditional recurrence statements do not require attainment.

For $\alpha\in X$ and $N\geq2$, consider the orbit set
\[
 \Orb_N(\alpha)=\{0,\alpha,\ldots,(N-1)\alpha\},
\]
with repetitions discarded. If this set has at least two points, put
\begin{equation}\label{eq:distances}
 \delta_N(x)=\min_{\substack{y\in\Orb_N(\alpha)\\y\ne x}}d_X(x,y),
 \qquad
 g_N(\alpha)=\card\{\delta_N(x):x\in\Orb_N(\alpha)\}.
\end{equation}
We set $g_N=0$ for a singleton. The problem is to bound $g_N(\alpha)$
uniformly in $\alpha$ and $N$.

\begin{definition}[Packing number]\label{def:packing}
For a finite-dimensional real normed space $V$, let
\begin{equation}\label{eq:packing}
 P(V)=\max\bigl\{\card E:E\subset B_V(0,1),\
       \|v-w\|_V\geq1\ \text{for }v\ne w\bigr\},
\end{equation}
where $B_V(0,1)=\{v:\|v\|_V<1\}$ is the \emph{open} unit ball.
This maximum is finite; in fact $P(V)<3^d$ when $d\geq1$,
while $P(\{0\})=1$.
\end{definition}
\begin{remark}
If $W\subset V$ is a linear subspace and $V/W$ carries the quotient
norm, then
\[
    P(V/W)\leq P(V).
\]
Indeed, any admissible configuration in the open unit ball of $V/W$
can be lifted to the open unit ball of $V$, and quotient distances do
not exceed the distances between the chosen lifts.
\end{remark}

\begin{definition}[Reduced real factor]\label{def:reduced-space}
Let
\[
    W=\{v\in V:(tv,0)\in\Lambda\ \text{for every }t\in\R\}.
\]
Put
\[
    \overline V=V/W
\]
and equip $\overline V$ with the quotient norm. We denote by
$\overline\Lambda$ the image of $\Lambda$ in $\overline V\times U$.
We call $\overline V$ the reduced real factor of the quotient.
\end{definition}

\begin{definition}[Effective real factor]\label{def:effective-space}
Suppose that $V$ is an inner-product space. Identify $\overline V$
with $W^\perp$, and let $\pi:V\to W^\perp$ be orthogonal projection.
Put
\[
    E_\Lambda
    =
    \operatorname{span}_{\R}
    \{\pi(v):(v,u)\in\Lambda\},
    \qquad
    m_\Lambda=\dim E_\Lambda.
\]
We call $E_\Lambda$ the effective real factor and $m_\Lambda$ the
effective real dimension of the quotient.

The space $E_\Lambda$ carries the induced inner product. Throughout,
orthogonal complements of subspaces of $\overline V$ are taken
within $\overline V$.
\end{definition}

Our first main result shows that the ultrametric component introduces
no new complexity into the distance count: the uniform bound is governed
entirely by the geometry of the reduced real factor. For inner-product
metrics, it depends only on the still smaller effective real factor
determined by the subgroup.
\begin{theorem}[Effective packing bound]\label{thm:packing}
For every quotient \eqref{eq:quotient}, every $\alpha\in X$ and
every $N\geq2$,
\[
    g_N(\alpha)\leq P(\overline V)+1.
\]
If $V$ is an inner-product space, then
\[
    g_N(\alpha)\leq P(E_\Lambda)+1.
\]
\end{theorem}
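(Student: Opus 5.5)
Two reductions come first; after them I run a packing argument in the spirit of Haynes--Ramirez.

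\emph{Reduction.} I will check that $X$ is isometric to $(\overline V\times U)/\overline\Lambda$ with the max product metric built from the quotient norm on $\overline V$. Since $W\times\{0\}\subset\Lambda$ and $\Lambda+ (W\times\{0\})=\Lambda$, we get
\[
\inf_{\lambda}|a-\lambda|_{\A}=\inf_{\lambda}\inf_{w\in W}\max\{\|v-\lambda_V-w\|,|u-\lambda_U|\},
\]
which is exactly the quotient norm, because the max commutes with the infimum over $w$. So I may assume $W=0$.

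In the inner-product case I go further. Write $\overline V=E\oplus E^\perp$ with $E=E_\Lambda$. Then every $\lambda\in\Lambda$ has $V$-component in $E$. For $a=(e+f,u)$ with $e\in E$, $f\in E^\perp$, each term $|a-\lambda|$ equals $\max\{\sqrt{\|e-\lambda_E\|^2+\|f\|^2},|u-\lambda_U|\}$. The orbit of $\alpha=(e_0+f_0,u_0)$ has $E^\perp$-components $kf_0$. Distances between orbit points $j\alpha,k\alpha$ therefore depend on $(j-k)$ through the pair $(\|(j-k)f_0\|$, quotient data in $(E\times U)/\Lambda)$. This is not a reduction to $E$ alone, so the final argument must handle a fixed orthogonal "drift". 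My plan is to run the packing argument below directly, using only differences that lie in the lattice-generated directions. The key point is that the second-closest comparison vectors all share the form $(j-k)\alpha-\lambda$, and the extra coordinate is a function of $|j-k|$ only.

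\emph{Main argument.} Let $\delta_N(k\alpha)$ be realised or approximated, and set $r(n)=\|n\alpha\|_X$ for $1\le n<N$. For each orbit point $x=k\alpha$, $\delta_N(x)=\min\{r(n): 1\le n\le N-1,\ 0\le k\pm n\le N-1\}$, with the appropriate sign. Call $n$ a "record" if $r(n)<r(m)$ for all $m<n$. The standard argument shows that every value $\delta_N(x)$ is either the global minimum $r(q_1)$ or comes from a point $x$ near the ends of the orbit, where the minimizing differences are forced to be larger. More precisely, the candidate distances are $r(n)$ for the records $n$. Let the records below $N$ be $q_1<\dots<q_s$ with distances $r_1>\dots>r_s$. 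For $x=k\alpha$ with $k<N-q_s$ or $k\ge q_s$, $\delta=r_s$. The remaining $k$ see smaller windows, so each takes a value $r_i$ with $q_i$ large enough, namely $q_i+q_{i'}>N$-type constraints. The count then reduces to showing that the number of indices $i$ with $q_i>N-q_s$ (or of pairs realised) is at most $P+1$.

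\emph{Packing step (main obstacle).} Suppose $t=P+1$ distinct values $r_{i_1}>\dots>r_{i_t}$ beyond the minimum occur. Pick lifts $\lambda$ with $a_j=q_{i_j}\tilde\alpha-\lambda_j$ nearly realising the norms. The ultrametric inequality shows that the differences $a_j-a_l$ corresponding to indices $q_{i_j}-q_{i_l}<N$ have norm at least $r_{i_j}$ (by record minimality), and their $U$-parts satisfy $|u_j-u_l|\le\max$. Hence the separation must come from the real parts. After rescaling by $r_{i_1}$ (the largest) we obtain $t$ points $v_j/r_{\cdot}$. I expect to need to normalize relative to the smallest relevant radius. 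The real parts then lie in the open unit ball and are $1$-separated, contradicting the definition of $P$ in \eqref{eq:packing}. The delicate issues are three: non-attainment of infima, which I handle by $\varepsilon$-perturbation using the strict openness of $B_V(0,1)$; ensuring the $U$-coordinate cannot carry the separation, which is where the ultrametric inequality \eqref{eq:ultrametric} is used; and, in the inner-product case, projecting the real parts orthogonally onto $E_\Lambda$. The orthogonal drift $(q_{i_j}-q_{i_l})f_0$ must be shown to only help separation, via Pythagoras and the monotonicity of records. This yields configurations in $E_\Lambda$ and gives the bound $P(E_\Lambda)+1$.
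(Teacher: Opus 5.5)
Your reduction to $(\overline V\times U)/\overline\Lambda$ is correct, but the main argument has a genuine gap: you never identify which best-return denominators have to be packed, so the separation you extract is at the wrong scale. The precise mechanism, when the orbit points are distinct, is $\delta_N(j\alpha)=R(\max\{j,N-1-j\})$. As $j$ varies, $\max\{j,N-1-j\}$ runs exactly over the integers in $[\lfloor N/2\rfloor,N-1]$, hence $g_N(\alpha)=1+\#\{n:\lfloor N/2\rfloor<q_n\le N-1\}$. Your proposed target, that the number of $i$ with $q_i>N-q_s$ is at most $P+1$, is false in general. If $q_s=N-1$ it counts every record except $q_1$, and for an irrational rotation of the circle this number is unbounded as $N$ grows.

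The packing works because the records in $(Q,N-1]$, with $Q=\lfloor N/2\rfloor$, lie in a window shorter than $Q$ that sits above $Q$. Any two of them differ by some $m\le Q$, so $\rho(m)\ge R(Q)$, while each of them has $\rho<R(Q)$. This gives a single common scale $r=R(Q)$. Choose lifts with $|e_i|_{\A}<r$; this is possible because the inequality is strict, so no $\varepsilon$-perturbation is needed. Ultrametricity gives $|u_i-u_j|_U<r$, hence $\|v_i-v_j\|\ge r$, and $v_i/r$ is admissible for $P(\overline V)$.

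Your separation $\|a_j-a_l\|\ge r_{i_j}$ uses only that the difference is smaller than the later record of the pair. Constraints of the form $\|v_j\|<r_j$ and $\|v_j-v_l\|\ge r_{\max\{j,l\}}$, with $r_j$ decreasing, admit arbitrarily many points (e.g.\ $r_j=3^{-j}$, $v_j=0.9\cdot3^{-j}$ on a line). That is why neither normalisation you contemplate yields a unit-ball packing.

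The same window drives the inner-product refinement. The $E_\Lambda^\perp$-drift does not help separation; it reduces separation in $E_\Lambda$. Write $v_i=x_i+t_ib$ with $c=\|b\|$. The facts $t_i>Q\ge|t_i-t_j|$ give $\|x_i\|^2<r^2-Q^2c^2\le\|x_i-x_j\|^2$, so $x_i/s$ with $s=\sqrt{r^2-Q^2c^2}$ is admissible in $E_\Lambda$.

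Finally, you must treat separately the case where $\alpha$ has order $h<N$, since your formula for $\delta_N$ fails there. In that case the orbit is a finite cyclic group acting transitively on itself by isometries, so $g_N\le1$.
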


\begin{definition}[Strong kissing number]\label{def:kissing}
The strong kissing number, also called the \emph{strict kissing number},
is
\begin{equation}\label{eq:kissing}
 \sk=\max\bigl\{\card C:C\subset S^{d-1},\
           \langle\xi,\eta\rangle<\tfrac12\ \text{for }\xi\ne\eta\bigr\}.
\end{equation}
The ordinary kissing number $K_d$ permits equality in the inner-product
condition, so $\sk\leq K_d$. For the Euclidean ball, \eqref{eq:kissing}
is also called the strict Hadwiger number
\cite{NaszodiPachSwanepoel2017}.
\end{definition}

\begin{corollary}[Inner-product norm]\label{cor:euclidean}
Suppose that the norm of $V$ is induced by an inner product. If
$m_\Lambda\geq1$, then
\[
    g_N(\alpha)\leq \sigma_{m_\Lambda}^{>}+1.
\]
If $m_\Lambda=0$, then
\[
    g_N(\alpha)\leq2.
\]
In particular, for effective dimensions one, two and three the respective
bounds are $3$, $6$ and $13$, independently of the ambient dimension.
\end{corollary}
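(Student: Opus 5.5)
The corollary will follow from the second assertion of Theorem~\ref{thm:packing}, $g_N(\alpha)\le P(E_\Lambda)+1$, once we establish the comparison
\[
 P(E)\le \sigma_m^{>}\qquad\text{for a Euclidean space $E$ of dimension } m\ge1,
\]
together with $P(\{0\})=1$ for the case $m_\Lambda=0$. Indeed, if $m_\Lambda=0$ then $E_\Lambda=\{0\}$, so Definition~\ref{def:packing} gives $P(E_\Lambda)=1$ and hence $g_N(\alpha)\le2$. If $m_\Lambda\ge1$, then $E_\Lambda$ carries the induced inner product and is isometric to $\R^{m_\Lambda}$ with the Euclidean norm. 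The bound then reads $g_N(\alpha)\le\sigma^{>}_{m_\Lambda}+1$.

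The key step is the geometric inequality $P(\R^m)\le\sigma_m^{>}$. Take $E\subset B(0,1)$ open with pairwise distances $\ge1$.

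\textbf{Case 1: $0\notin E$.} Normalise each point, $\xi_v=v/\|v\|$. For $v\ne w$ with $0<\|v\|,\|w\|<1$ and $\|v-w\|\ge1$, we must show $\langle\xi_v,\xi_w\rangle<\tfrac12$. Write $a=\|v\|$, $b=\|w\|$ and $c=\cos\angle(v,w)$. Then
\[
 1\le\|v-w\|^2=a^2+b^2-2abc .
\]
If $c\ge\tfrac12$, this gives $a^2+b^2-ab\ge1$. But for $a,b\in(0,1)$ we have $a^2+b^2-ab\le\max(a,b)^2<1$: assuming $a\ge b$, the difference is $a^2-(a^2+b^2-ab)=b(a-b)\ge0$. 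This contradiction shows $c<\tfrac12$. In particular the unit vectors $\xi_v$ are distinct and pairwise satisfy the strict condition, so $\card E\le\sigma_m^{>}$.

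\textbf{Case 2: $0\in E$.} Every other point then has norm $\ge1$, which contradicts membership in the open ball. So $E=\{0\}$, and $\card E=1\le\sigma_m^{>}$ because $\sigma_m^{>}\ge1$.

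This is where the openness of the ball in Definition~\ref{def:packing} is essential; with a closed ball the strict inequality would degrade to the ordinary kissing number. I expect this verification to be the only real obstacle, and the argument above handles it.

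Finally, for the numerical claims I would cite the known values: $\sigma_1^{>}=2$ and $\sigma_2^{>}=5$, the latter because strict angles $>60^\circ$ allow at most five directions, and $\sigma_3^{>}=12$, since the icosahedron's vertices have pairwise inner product $\le 1/\sqrt5<\tfrac12$, while $K_3=12$ gives the upper bound. These yield the bounds $3$, $6$ and $13$. Independence of the ambient dimension is automatic, since only $m_\Lambda$ enters.
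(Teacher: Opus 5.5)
Your proposal is correct and is essentially the paper's proof: the corollary is read off from the second assertion of Theorem~\ref{thm:packing}, with $P(\{0\})=1$ handling $m_\Lambda=0$ and the radial-projection estimate $a^2+b^2-ab\le\max(a,b)^2<1$ of Lemma~\ref{lem:packing-values} giving $P(E_\Lambda)\le\sigma_{m_\Lambda}^{>}$ (the paper proves equality, but only this inequality is needed). The values $\sigma_1^{>}=2$, $\sigma_2^{>}=5$, $\sigma_3^{>}=12$ are justified as in the paper. Your side remark about closed balls is slightly imprecise: a closed ball admits the centre together with an ordinary kissing configuration, i.e.\ $K_m+1$ points. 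It plays no role in the argument.
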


\begin{corollary}[Maximum norm]\label{cor:sup}
Suppose that the unit ball of $V$ is a linear image of a $d$-dimensional
cube. Then
\[
    g_N(\alpha)\leq2^d+1.
\]
If, in addition, the unit ball of $\overline V$ is a linear image of a
cube, choose corresponding cube coordinates and let $k$ be the number of
cube-coordinate directions occurring in the real projection of
$\overline\Lambda$. Then the sharper bound
\[
    g_N(\alpha)\leq2^k+1,
    \qquad k\leq\dim\overline V\leq d,
\]
holds.
\end{corollary}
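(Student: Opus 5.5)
Both bounds of Corollary~\ref{cor:sup} follow from Theorem~\ref{thm:packing}. The first uses $g_N\le P(\overline V)+1$ together with the Remark $P(\overline V)\le P(V)$. The second requires re-running the reduction behind Theorem~\ref{thm:packing} inside a coordinate subspace. The common ingredient is a packing count for cubes: if the unit ball of a $k$-dimensional space is a linear image of $[-1,1]^k$, then in cube coordinates the norm is $\|x\|_\infty$, and the packing number is $2^k$. To see the upper bound, subdivide the open cube $(-1,1)^k$ into the $2^k$ half-open orthant boxes of side $1$. Any two points in the same box differ by less than $1$ in every coordinate, so their sup-distance is $<1$. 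Hence an admissible configuration has at most one point per box. The points $(\pm\tfrac12,\dots,\pm\tfrac12)$ show the bound is attained, though only the upper bound matters here. Since $P$ is invariant under linear isometries, $P(V)=2^d$, and Theorem~\ref{thm:packing} with the Remark gives $g_N(\alpha)\le P(\overline V)+1\le 2^d+1$.

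For the sharper bound, assume the unit ball of $\overline V$ is a cube, fix cube coordinates $e_1,\dots,e_n$ with $n=\dim\overline V$, and let $S$ be the set of indices $i$ for which some element of the real projection of $\overline\Lambda$ has nonzero $i$-th coordinate, so $k=\#S$. Put $F=\operatorname{span}\{e_i:i\in S\}$ and $G=\operatorname{span}\{e_i:i\notin S\}$. Then $\overline V=F\oplus G$ with $\|f+g\|=\max\{\|f\|_\infty,\|g\|_\infty\}$, and $\overline\Lambda\subset F\times U$. The space $\overline X=(\overline V\times U)/\overline\Lambda$ is isometric to $X$; this identification is part of the reduction in Theorem~\ref{thm:packing}, which I take as available. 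The ambient metric on $\overline V\times U$ then splits as a maximum of the $G$-part and the $(F\times U)$-part, and $\overline\Lambda$ has no $G$-component. Consequently
\[
\overline X\cong G\times\bigl((F\times U)/\overline\Lambda\bigr)
\]
with the maximum metric: $\|(g,y)\|=\max\{\|g\|_\infty,\|y\|\}$, where the infimum over $\overline\Lambda$ only affects the second coordinate.

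Now write $\alpha=(\gamma,\beta)$. The orbit point $j\alpha$ is $(j\gamma, j\beta)$, and the $G$-coordinate lives in a normed vector space with no quotient. I would apply Theorem~\ref{thm:packing} to the product $G\times(F\times U)/\overline\Lambda$, viewed as $(F'\times U)/\Lambda'$ with $F'=F\oplus G$ and $\Lambda'=\overline\Lambda$. This alone only returns $\overline V$. The intended mechanism is the one the inner-product statement uses: real directions not reached by the subgroup should not contribute. The key step is a max-norm analogue of $E_\Lambda$. Differences of orbit points are $(i-j)\alpha$, and the $G$-part of $(i-j)\alpha$ is $(i-j)\gamma$, which is never reduced. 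I would inspect the proof of Theorem~\ref{thm:packing}, where distance values are indexed by best-return vectors lying in a ball, and show that the packing configuration it produces can be taken inside $F$. The idea is that nearest-neighbour differences with a large $G$-component are forced to be small integer multiples comparable to the minimal one, and hence are collinear in $G$, so they contribute at most the single extra value accounted for by the $+1$.

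I expect this last step to be the main obstacle. Concretely, I would first try the case $\gamma\ne0$. Here $\|j\gamma\|$ grows linearly in $j$, so $\delta_N(x)\ge$ a quantity governed by the $G$-coordinate only when $\|\gamma\|_\infty$ is the binding term. I would split the distinct distance values into those realised by $(F\times U)/\overline\Lambda$-returns, bounded via the packing argument in $F$ by $2^k$, plus one more. Failing that, I would fall back on reading the cube-coordinate projection argument directly from the proof of the inner-product case, replacing the orthogonal projection $\pi$ by the coordinate projection onto $F$. That projection is $1$-Lipschitz and norm-preserving on $F$ for the max norm, which is exactly the property the Euclidean projection supplies there.
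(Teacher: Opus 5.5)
Your first bound is fine: the orthant count gives $P(V)=2^d$, and $g_N\le P(\overline V)+1\le P(V)+1$ gives $2^d+1$ (the paper uses $g_N\le P(V)+1$ directly from the unreduced presentation). Your set-up for the sharper bound is also right: $\overline V=F\oplus G$ carries a maximum-product norm, $\overline\Lambda\subset F\times U$, and $X\cong G\times Y$ with $Y=(F\times U)/\overline\Lambda$. However, you never prove $g_N\le2^k+1$. The decisive step is left as ``the main obstacle'', and the heuristics you offer point at the wrong mechanism. The $+1$ in the bound is the initial value $R(Q)$ in the record count $g_N=1+\card\{n:Q<q_n\le N-1\}$; it is not a slot reserved for returns with large $G$-component. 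Also, $1$-Lipschitzness of the coordinate projection only gives $\|x_i-x_j\|_\infty\le\|v_i-v_j\|_\infty$, which is the wrong inequality for transferring separation from $\overline V$ to $F$.

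The missing idea is that the $G$-part of a difference of two record lifts is automatically below the threshold. Let $Q=\lfloor N/2\rfloor$ and $r=R(Q)>0$, and let $t_1,\dots,t_m$ be the record times in $(Q,N-1]\subset(Q,2Q]$, so that $\rho(t_i)<r$. Let $\gamma\in G$ be the $G$-component of $\alpha$. Since $\overline\Lambda$ has no $G$-component, lifts can be written $e_i=(x_i+t_i\gamma,u_i)$ with $x_i\in F$. Then $|e_i|_{\A}<r$ gives $\|x_i\|_\infty<r$ and $t_i\|\gamma\|_\infty<r$. For $i\neq j$ we have $1\le|t_i-t_j|<Q$, so
\[
 r\le\rho(|t_i-t_j|)\le\max\bigl\{\|x_i-x_j\|_\infty,\ |t_i-t_j|\,\|\gamma\|_\infty,\ |u_i-u_j|_U\bigr\}.
\]
The middle term is at most $t_i\|\gamma\|_\infty<r$ because $|t_i-t_j|<Q<t_i$, and the last term is $<r$ by ultrametricity. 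Hence $\|x_i-x_j\|_\infty\ge r$. The points $x_i/r$ are therefore $1$-separated in the open unit ball of $F\cong(\R^k,\|\cdot\|_\infty)$, so $m\le2^k$ and $g_N\le2^k+1$; the finite-order case gives $g_N\le1$. This is precisely the maximum-product case of the refined separation lemma, i.e.\ Remark~\ref{rem:effective-factor}, which is how the paper concludes.

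Alternatively, your own product decomposition can be finished directly. Write $\rho(q)=\max\{q\|\gamma\|_\infty,\rho_Y(q)\}$, where $\rho_Y$ is the return function on $Y$. As in the proof of Proposition~\ref{prop:shulga-real}, every strict record of $\rho$ is then a strict record of $\rho_Y$. Corollary~\ref{cor:dyadic} applied to the quotient $Y$, whose real factor is $F$, bounds the records in $(Q,2Q]$ by $P(F)=2^k$; if $R_Y(Q)=0$ there are none.
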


\begin{remark}[Effective factors]\label{rem:effective-factor}
The second conclusion of Theorem~\ref{thm:packing} is a special case of
a slightly more general statement. Suppose that
\[
    \overline V=E\oplus F,
    \qquad
    \overline\Lambda\subset E\times U,
\]
and that either the decomposition is orthogonal for an inner-product
norm or
\[
    \|e+f\|_{\overline V}
    =\max\{\|e\|_E,\|f\|_F\}.
\]
Then
\[
    g_N(\alpha)\leq P(E)+1.
\]
For an inner-product norm one may take
$E=E_\Lambda$ and $F=E_\Lambda^\perp$.
\end{remark}

The constants in the effective refinements of the two corollaries reflect
the geometry of the relevant effective real factor. A $k$-dimensional cube has $2^k$ sign
regions, in each of which any two points have distance less than one.
For an inner-product norm, radial projection of a $1$-separated set in
the effective real ball produces a strong kissing configuration.
In both cases the ultrametric component disappears from the separation
count by \eqref{eq:ultrametric}.

\begin{remark}[The purely ultrametric case]\label{rem:purely-ultrametric}
Theorem~\ref{thm:packing} also holds for $V=\{0\}$, with $P(V)=1$.
It gives $g_N\leq2$ for a translation on an ultrametric abelian group
modulo a closed subgroup. This bound is sharp: for $\alpha=1\in\Z_2$
and $N=3$, the points $0,1,2$ have nearest-neighbour distances
$1/2,1,1/2$.
\end{remark}

Corollary~\ref{cor:sup} includes the full uniform upper-bound statements
of Chevallier and Haynes--Ramirez, and the adelic upper bound of
Das--Haynes. For the last specialization, take
\[
 V=\R,\qquad U=\prod_{p\in\mathcal P}'\Q_p,\qquad
 \Lambda=\operatorname{diag}\Z[1/p:p\in\mathcal P].
\]
For finite $\mathcal P$ use $|u|_U=\max_{p\in\mathcal P}|u_p|_p$;
for infinite $\mathcal P$ use
$|u|_U=\sup_{p\in\mathcal P}p^{-1}|u_p|_p$.
These are precisely the ultrametric components of the metrics in
\cite{DasHaynes2023}, including the weights for infinite $\mathcal P$.
For completeness, the diagonal subgroup is closed: write a nonzero
$\gamma=m/D$ in lowest terms. If $D>1$, a prime $p\mid D$ gives
$p^{-1}|\gamma|_p\geq1$; if $D=1$, then $|\gamma|\geq1$.
Thus distinct diagonal elements have distance at least one.
Taking $d=1$ therefore gives $g_N\leq3$.
The sharp examples in the real maximum-metric setting show that the
bound $2^k+1$ in Corollary~\ref{cor:sup} is optimal for $k=2,3$.

\begin{remark}[Conventions for distances on a torus]\label{rem:convention}
Some real-torus papers, including \cite{HaynesRamirez2021,HaynesMarklof2022,
Shulga2026}, minimize over all nonzero lifted displacements and allow a
nonzero lattice displacement from a point to itself. If
$s=\min_{0\ne\ell\in\mathcal L}\|\ell\|$, this replaces each
$\delta_N(x)$ by $\min\{s,\delta_N(x)\}$. Such a common truncation cannot
increase the number of distinct values. A singleton contributes one
value under that convention. Thus all our upper bounds also apply
with this convention; the comparison does not rely on identifying the
two definitions.
\end{remark}

\begin{remark}
The packing argument behind the basic ambient and reduced bounds is
closely related to the general metric principle of Biringer and Schmidt
\cite[Lemma~1]{BiringerSchmidt2008} for finite orbit segments of
isometries. The effective-factor refinement uses in addition the
additive structure of the orbit.
\end{remark}

Recall that $m_\Lambda\leq d$ is the number of independent real
directions occurring in the subgroup after its continuous real
directions have been factored out; it may be much smaller than the
ambient dimension $d$.

The strong kissing numbers are difficult to determine: already in
dimension four, it is not known whether $\sigma_4^{>}<24$. Even in
effective dimensions two and three, where their exact values are known,
Corollary~\ref{cor:euclidean} gives bounds of $6$ and $13$,
respectively. For purely real quotients of ranks two and three,
Proposition~\ref{prop:shulga-real} gives the stronger bounds $5$ and $9$.

This comparison reverses in sufficiently high effective dimensions.
For example, the uniqueness of the $E_8$ and Leech kissing
configurations \cite{BannaiSloane1981,ConwaySloane1999}, each containing
pairs at angle $60^\circ$, implies
\[
 \sigma_8^{>}\leq239,\qquad \sigma_{24}^{>}\leq196559.
\]
Consequently, Corollary~\ref{cor:euclidean} gives, for arbitrary $U$
and $\Lambda$,
\begin{equation}\label{eq:exceptional-dimensions}
 g_N\leq240\quad(m_\Lambda=8),\qquad
 g_N\leq196560\quad(m_\Lambda=24).
\end{equation}
More generally, the Kabatiansky--Levenshtein estimate
\cite{KabatianskyLevenshtein1978} gives
\begin{equation}\label{eq:asymptotic}
 g_N\leq1+2^{(c_{\mathrm{KL}}+o(1))m_\Lambda},
 \qquad c_{\mathrm{KL}}=0.40141\ldots,\quad m_\Lambda\longrightarrow\infty.
\end{equation}
Thus, as a function of the effective real dimension, the
strong-kissing estimate grows exponentially more slowly than the
corresponding upper bound $2^{m_\Lambda}+1$.

\subsection{Best returns: persistence and failure of denominator growth}
\label{subsec:returns}

For a translation on a metric abelian group, define
\begin{equation}\label{eq:returns}
 \rho(q)=d_X(q\alpha,0),\qquad
 R(Q)=\min_{1\leq q\leq Q}\rho(q)\quad(q,Q\in\N).
\end{equation}
Its \emph{strict best-return denominators} are
\begin{equation}\label{eq:records}
 q_1=1,\qquad q_{n+1}=\min\{q>q_n:\rho(q)<\rho(q_n)\},
 \qquad r_n=\rho(q_n).
\end{equation}
The sequence is allowed to terminate, and a zero return is retained as
its final term. Statements about $q_{n+M}$ are understood only when
that term exists.

The bridge from best returns to nearest neighbours is elementary.
If the first $N$ orbit points are distinct, then
\begin{equation}\label{eq:intro-count}
 g_N(\alpha)=1+\card\{n:\lfloor N/2\rfloor<q_n\leq N-1\}.
\end{equation}
This is the mechanism behind Chevallier's lemma
\cite{Chevallier1996,ChevallierApproximations1996}, used in
\cite{Shutov2024}. We record its growth consequence in a form that
requires only a translation-invariant metric.

\begin{theorem}[From denominator growth to distances]\label{thm:growth}
Let $X$ be any abelian group with a translation-invariant metric, and
let $\alpha\in X$. Suppose that an integer $M\geq1$ satisfies
\begin{equation}\label{eq:doubling}
 q_{n+M}\geq2q_n
\end{equation}
whenever $q_{n+M}$ exists. Then $g_N(\alpha)\leq M+1$ for every $N\geq2$.
In particular, the conclusion holds under the stronger hypothesis
\begin{equation}\label{eq:recurrence-M}
 q_{n+M}\geq q_n+q_{n+1}.
\end{equation}
\end{theorem}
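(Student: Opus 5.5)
The plan is to reduce everything to the counting identity \eqref{eq:intro-count}. The key fact is that the window $(\lfloor N/2\rfloor,N-1]$ can contain at most $M$ best-return denominators under \eqref{eq:doubling}. The repeated-point case has to be handled separately.

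\emph{Distinct orbit points.} Suppose first that $0,\alpha,\dots,(N-1)\alpha$ are distinct. I will re-derive \eqref{eq:intro-count} briefly, or cite it.

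By translation invariance, $d_X(x\alpha,y\alpha)=\rho(|x-y|)$. Hence
\[
\delta_N(x\alpha)=\min\{\rho(q):1\le q\le \max(x,N-1-x)\}=R(\max(x,N-1-x)).
\]
As $x$ ranges over $0,\dots,N-1$, the quantity $Q=\max(x,N-1-x)$ ranges exactly over the integers in $[\lceil (N-1)/2\rceil, N-1]$. The function $R$ is non-increasing and changes value precisely at the denominators $q_n$. So the number of distinct values of $R(Q)$ on this range is $1$ plus the number of $q_n$ with $\lceil (N-1)/2\rceil<q_n\le N-1$. This lower endpoint equals $\lfloor N/2\rfloor$, which is consistent with \eqref{eq:intro-count}.

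Now suppose $M+1$ denominators $q_n<\dots<q_{n+M}$ all lie in $(\lfloor N/2\rfloor,N-1]$. Then
\[
q_{n+M}\ge 2q_n\ge 2(\lfloor N/2\rfloor+1)\ge N,
\]
which is a contradiction. So at most $M$ denominators lie in the window, and $g_N\le M+1$.

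\emph{Repeated orbit points.} Otherwise some $q<N$ has $q\alpha=0$. Let $q_*$ be the least such $q$. Then $\Orb_N$ consists of the $q_*$ distinct points $0,\alpha,\dots,(q_*-1)\alpha$, forming the cyclic group generated by $\alpha$. For $q_*=1$ it is a singleton and $g_N=0$.

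For $q_*\geq2$, every point $x$ of this finite cyclic group has the same nearest-neighbour distance by group-translation invariance, namely $\min_{1\le q<q_*}\rho(q)$. Hence $g_N=1\le M+1$.

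\emph{The stronger hypothesis.} Since the $q_n$ are strictly increasing, $q_{n+1}>q_n$. Therefore \eqref{eq:recurrence-M} gives $q_{n+M}\ge q_n+q_{n+1}>2q_n$, which implies \eqref{eq:doubling}.

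\emph{Main obstacle.} The only delicate point is the bookkeeping at the window's lower endpoint and the termination convention. The record sequence may stop at a zero return, and this matters only in the repeated-point case, which is treated separately above. I would double-check the parity cases for $\lceil (N-1)/2\rceil=\lfloor N/2\rfloor$ explicitly.
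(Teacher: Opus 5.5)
Your proof is correct and follows essentially the same route as the paper. You dispose of the repeated-orbit case separately (there $g_N\le1$), and you apply the record-count identity \eqref{eq:exact-count} on the window $(\lfloor N/2\rfloor,N-1]$, where $M+1$ records would force $q_{n+M}\ge 2q_n>2\lfloor N/2\rfloor\ge N-1$. The only difference is that you re-derive the running-minimum identity of Lemma~\ref{lem:running} inline instead of citing it.
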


In the real lattice setting, denominator growth has a substantial
history. Lagarias \cite{Lagarias1982}, Theorems~2.2--2.3, proved the
$2^d$-step recurrence for the supremum norm under the hypotheses stated
there, as well as a $2^{d+1}$-step inequality for arbitrary norms.
Romanov \cite{Romanov2006} obtained the four-step Euclidean recurrence
in dimension two. For every $d$-dimensional inner-product space and
every full-rank lattice, Shulga \cite{Shulga2026}, Theorem~1.4, proves
\begin{equation}\label{eq:shulga}
 q_{n+2^d}\geq
 \min\{2q_{n+1},\,q_n+q_{n+2^{d-1}}\}
 \geq q_n+q_{n+1}.
\end{equation}
Applying Theorem~\ref{thm:growth} with $M=2^d$ recovers his bound
$g_N\leq2^d+1$ on real flat tori.

Although Shulga states his theorem for full-rank lattices, compactness is not essential to his recurrence argument. The following proposition is a straightforward extension of his result to arbitrary closed subgroups, with the index determined by the rank of the discrete part of the subgroup.
\begin{proposition}[Projection extension of Shulga's recurrence]
\label{prop:shulga-real}
Suppose that $U=\{0\}$, that $V$ is a finite-dimensional inner-product
space and that $\Lambda\subset V$ is closed. Let $\Lambda^\circ$ be the
identity component of $\Lambda$ and put
\[
 r=\operatorname{rank}(\Lambda/\Lambda^\circ).
\]
If $r\geq1$, then
\begin{equation}\label{eq:shulga-real}
 q_{n+2^r}\geq q_n+q_{n+1}
\end{equation}
whenever the left-hand side is defined. Consequently,
$g_N(\alpha)\leq2^r+1$ for every $N\geq2$. If $r=0$, there is no strict
best-return denominator after $q_1=1$, and $g_N(\alpha)\leq1$.
\end{proposition}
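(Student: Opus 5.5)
Throughout, I will write $\langle\cdot,\cdot\rangle$ for the inner product on $V$. The plan is to reduce to Shulga's full-rank theorem \eqref{eq:shulga} by an orthogonal projection. The structure theorem for closed subgroups of a finite-dimensional real vector space gives $\Lambda=W\oplus L$, where $W=\Lambda^\circ$ is a linear subspace. Here $L$ is a discrete subgroup of rank $r$ whose span meets $W$ trivially. Let $\pi:V\to W^\perp$ be orthogonal projection. Then $\pi(\Lambda)=\pi(L)$ is a closed discrete subgroup of $W^\perp$ of rank $r$. Since $W\subset\Lambda$, every coset $v+\Lambda$ is $W$-invariant, and orthogonality gives $\|v-w-\ell\|^2=\|\pi(v)-\pi(\ell)\|^2+\|(1-\pi)(v-\ell)-w\|^2$. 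Taking the infimum over $w\in W$ kills the second term. Hence
\[
\|v+\Lambda\|_X=\min_{\ell\in L}\|\pi(v)-\pi(\ell)\|=\dist(\pi(v),\pi(L)),
\]
and $X$ is isometric, as a metric group, to $W^\perp/\pi(L)$. Since the return function $\rho$, and with it the whole denominator sequence $(q_n)$, depends only on the metric group structure, we may assume $W=\{0\}$ and $\Lambda=L$ discrete of rank $r$.

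Next let $E=\operatorname{span}_\R L$, of dimension $r$, and decompose $V=E\oplus E^\perp$ orthogonally. Write $\alpha=(\alpha_E,\alpha_\perp)$. Then $\rho(q)^2=\dist(q\alpha_E,L)^2+q^2\|\alpha_\perp\|^2$. If $\alpha_\perp=0$, we are exactly in Shulga's setting on the $r$-dimensional flat torus $E/L$, and \eqref{eq:shulga} with $d=r$ gives \eqref{eq:shulga-real}. If $\alpha_\perp\neq0$, the extra term is strictly increasing in $q$, so I would not quote Shulga's theorem as a black box. Instead I would rerun his argument, and this is the main obstacle. His proof compares the lattice displacements $p_j$ attached to records $q_j$ and uses the angular/parity pigeonhole on $\pi(L)$ modulo $2L$, which has $2^r$ classes. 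The inequalities it uses are of the form $\rho(q_j-q_i)\le$ (something built from $\rho(q_i),\rho(q_j)$) via the parallelogram law.

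My first attempt would be to check that each such inequality survives the extra term. For $q_i<q_j$ the added quantity satisfies $(q_j-q_i)^2\|\alpha_\perp\|^2\le q_j^2\|\alpha_\perp\|^2$, so the difference of two lifted vectors $(q_j\alpha-p_j)-(q_i\alpha-p_i)$, computed in all of $V$, still obeys the parallelogram/angle estimates. The reason is that these estimates are stated for vectors $y_j=q_j\alpha-p_j\in V$, with $\rho(q_j)=\|y_j\|$, and $L$ enters only through the parity classes of $p_j\in L$. Thus Shulga's argument should apply verbatim to the vectors $y_j$ in $V$ with $p_j$ ranging over a rank-$r$ lattice, giving the index $2^r$.

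A cleaner alternative that I would also try is an approximation argument. For $\alpha_\perp\neq0$, choose a full-rank lattice $L_T=L\oplus T\mathcal M$ with $\mathcal M$ a lattice in $E^\perp$ and $T\to\infty$. For fixed $q\le Q$ the minimizing displacement lies in $L$ once $T$ is large, so the finitely many relevant records coincide with those for $V/L_T$. Shulga's theorem for $L_T$ then gives only index $2^{\dim V}$, however, so the parity-class rerun above is the route I would actually pursue.

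Finally, the case $r=0$ is immediate. Then $\Lambda=\Lambda^\circ=W$, so $X\cong W^\perp$ with its norm. Consequently $\rho(q)=q\|\pi(\alpha)\|$ is nondecreasing, and there is no strict record after $q_1$. The orbit points are either all $0$ (giving $g_N=0$) or equally spaced on a line, in which case the formula \eqref{eq:intro-count}, or a direct check, gives the stated bound. The consequence $g_N\le2^r+1$ then follows from Theorem~\ref{thm:growth} with $M=2^r$.
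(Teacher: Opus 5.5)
Your reduction to $W^\perp/\pi(L)$, the splitting $\rho(q)^2=\dist(q\alpha_E,L)^2+q^2\|\alpha_\perp\|^2$, the case $\alpha_\perp=0$, the case $r=0$ and the final appeal to Theorem~\ref{thm:growth} are all fine and agree with the paper. The gap is the case $\alpha_\perp\neq0$, which is exactly the new, noncompact content of the proposition. There you only assert that Shulga's argument ``should apply verbatim'' to the vectors $y_j=q_j\alpha-p_j\in V$. This rests on a guessed description of his proof (parity classes modulo $2L$ plus parallelogram estimates), and nothing is actually checked. The check is not automatic: the $y_j$ now live in a space of dimension larger than $r$. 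Any step of the original proof that uses the ambient geometry, rather than only the lattice rank, would have to be re-examined to keep the index $2^r$. As written, this case is unproved.

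The missing idea is that the strictly increasing extra term is exactly what allows a black-box use of Shulga's theorem, not a reason to avoid one. Put $\delta(q)=\dist(q\alpha_E,L)$. Suppose $q$ is a strict best-return denominator for $\rho$ and some $1\le p<q$ had $\delta(p)\le\delta(q)$. Then $\rho(p)^2\le\delta(q)^2+p^2\|\alpha_\perp\|^2<\rho(q)^2$, contradicting $\rho(p)>\rho(q)$, which holds for every earlier $p$. Hence every $\rho$-record is a strict best-approximation denominator $p_j$ on the compact torus $E/L$, that is, $q_n=p_{k_n}$ for some increasing $k_n$. Shulga's theorem gives $p_{j+2^r}\ge p_j+p_{j+1}$, and this recurrence passes to subsequences. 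Take $j=k_{n+1}-1$; then $j\ge k_n$ and $j+2^r\le k_{n+2^r}$. Therefore $q_{n+2^r}\ge p_{j+2^r}\ge p_j+p_{j+1}\ge q_n+q_{n+1}$. This settles $\alpha_\perp\neq0$ without reopening Shulga's proof, and it is how the paper proceeds.
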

Proposition~\ref{prop:shulga-real} can be proved by adapting Shulga's original argument with minor modifications. Here we give an alternative proof by a short reduction to his Theorem~1.4. Orthogonal projection shows that the best-return denominators of the possibly noncompact quotient form a subsequence of those of a compact $r$-dimensional torus. Since the recurrence $q_{n+M}\geq q_n+q_{n+1}$ is inherited by subsequences, the proposition follows from Shulga's theorem. We give the details in Section~\ref{subsec:growth-proofs}.

In the purely real inner-product setting one has
\[
    m_\Lambda=
    \operatorname{rank}(\Lambda/\Lambda^\circ)=r.
\]
Thus Corollary~\ref{cor:euclidean} gives the strong-kissing bound
$\sigma_r^{>}+1$, while Proposition~\ref{prop:shulga-real} gives
$2^r+1$. For $r=0$ the latter improves the general mixed bound to
$g_N\leq1$.

\medskip

The denominator inequality itself need not persist when $U$ is
nontrivial. On the solenoid
\[
 X=(\R\times\Q_2)/\operatorname{diag}\Z[1/2],
 \qquad \alpha=(3/5,0)+\operatorname{diag}\Z[1/2],
\]
the first strict best returns are $1,3,6,7$. Hence $q_4=7<3+6$,
contradicting the real two-step recurrence. Nevertheless,
Corollary~\ref{cor:sup} still gives $g_N\leq3$ on this space.
This is the adelic torus of Das--Haynes for $\mathcal P=\{2\}$, so the
failure occurs even within their adelic class. It concerns the
denominator structure, without implying a failure of the distance bound.

For every real norm there is a recurrence which does persist.
\begin{proposition}[Effective packing recurrence]\label{prop:weak}
For every mixed quotient,
\[
    q_{n+P(\overline V)+1}>q_n+q_{n+1}
\]
whenever the left-hand side is defined.

If $V$ is an inner-product space, then the stronger bound
\[
    q_{n+P(E_\Lambda)+1}>q_n+q_{n+1}
\]
holds.
\end{proposition}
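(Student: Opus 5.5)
Set $P=P(\overline V)$ (or $P=P(E_\Lambda)$ in the inner-product case) and argue by contradiction. Suppose $q_{n+P+1}$ exists and $q_{n+P+1}\le q_n+q_{n+1}$. Consider the $P+1$ denominators $q_{n+1}<q_{n+2}<\dots<q_{n+P+1}$. Their returns satisfy $r_{n+1}>r_{n+2}>\dots>r_{n+P+1}$, and all are strictly less than $r_n$. The aim is to produce $P+1$ points in the open unit ball of the reduced (or effective) real factor that are pairwise at distance at least one. This contradicts the definition of $P$.

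\textbf{Step 1 (separation).} For $n+1\le i<j\le n+P+1$ we have $0<q_j-q_i<q_{n+P+1}-q_{n+1}\le q_n$. Since $r_n=\rho(q_n)$ is a strict record, every $1\le q<q_n$ satisfies $\rho(q)\ge r_{n-1}>r_n$ when $n\ge2$. For $n=1$ there is no such $q$, which makes the case vacuous; with $q_n=1$ we get $q_{n+P+1}\le 1+q_2$, and this is handled by the same inequality. The precise statement needed is that every $q<q_n$ has $\rho(q)\ge r_n$, and this holds because $q_n$ is the first denominator with return below $r_{n-1}$, while all smaller $q$ have return at least $r_{n-1}\ge r_n$. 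Translation invariance gives $d_X(q_j\alpha,q_i\alpha)=\rho(q_j-q_i)\ge r_n$.

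\textbf{Step 2 (lifting and removing $U$).} Choose lifts $a_i=(v_i,u_i)\in\A$ of $q_i\alpha$ with $|a_i|_\A<r_n$. This is possible because $r_i<r_n$, so attainment of the infimum is not needed. Pass to $\overline V\times U$ by taking the quotient by $W$, which does not increase norms. The differences then satisfy $|a_j-a_i-\lambda|\ge r_n$ for all $\lambda\in\overline\Lambda$. Taking $\lambda=0$ gives $\max\{\|\bar v_j-\bar v_i\|,|u_j-u_i|_U\}\ge r_n$. By \eqref{eq:ultrametric}, $|u_j-u_i|_U\le\max\{|u_i|_U,|u_j|_U\}<r_n$. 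Therefore $\|\bar v_j-\bar v_i\|\ge r_n$. The rescaled points $\bar v_i/r_n$ lie in the open unit ball of $\overline V$ and are $1$-separated, and there are $P+1$ of them. This contradicts the definition of $P(\overline V)$, so $q_{n+P+1}>q_n+q_{n+1}$.

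\textbf{Step 3 (inner-product refinement), which I expect to be the main obstacle.} Write $\bar v_i=e_i+f_i$ with $e_i\in E_\Lambda$ and $f_i\in E_\Lambda^\perp$. The lift can be adjusted by $\lambda\in\overline\Lambda$, whose real parts lie in $E_\Lambda$, but the $f$-components cannot be controlled this way. The plan is to use the additive structure of the orbit. Since $q_i\alpha$ is a multiple of $\alpha$, the orthogonal components satisfy $f_i=q_if$, where $f$ is the $E_\Lambda^\perp$-component of a lift of $\alpha$. Modulo $\overline\Lambda$ this component is well defined, because $\overline\Lambda$ has no $E_\Lambda^\perp$-component. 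So any lift of $q\alpha$ has $F$-part exactly $qf$ and norm at least $q\|f\|$. The records therefore force $q_i\|f\|<r_n$.

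Now compare differences. The element $(q_j-q_i)\alpha$ has $F$-part $(q_j-q_i)f$ with $(q_j-q_i)\|f\|<q_j\|f\|$. Using Pythagoras, $\|e_j-e_i\|^2=\rho(q_j-q_i)^2$-type minimal norm minus $(q_j-q_i)^2\|f\|^2$. This is compared with $\|e_i\|^2<r_n^2-q_i^2\|f\|^2$. The delicate part is to show that the $e_i$, after suitable rescaling (for instance by $(r_n^2-q_{n+1}^2\|f\|^2)^{1/2}$), remain $1$-separated inside an open ball. Failing that, I would pass through the radial-projection argument to a strong kissing configuration, as in the proof of Theorem~\ref{thm:packing}, and mirror the argument used there for $E_\Lambda$. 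In the max-norm split case of Remark~\ref{rem:effective-factor} the same reasoning is cleaner, since the $F$-part is simply dropped once $\|f_j-f_i\|<r_n$ is checked.
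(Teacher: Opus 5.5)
Steps 1--2 are the paper's argument. They apply Lemma~\ref{lem:separation} with $Q=q_n$ to the $P+1$ times $q_{n+1},\dots,q_{n+P+1}$, in the reduced presentation of Lemma~\ref{lem:continuous-part}. There is one slip. The chain in Step 1 should read $q_j-q_i\le q_{n+P+1}-q_{n+1}\le q_n$, since equality holds for the extreme pair. So what you need is $\rho(q)\ge r_n$ for all $1\le q\le q_n$, i.e.\ $R(q_n)=r_n$. This holds because $\rho(q_n)=r_n$, and it makes the separate discussion of $n=1$ unnecessary.

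Step 3 is left as a plan rather than a proof, so as written the inner-product assertion is not established. Your fallback would not help. Radial projection of the $\bar v_i$ only reproduces the bound $P(\overline V)$. Radial projection of the $e_i$ presupposes the separation-in-a-ball statement you have not yet proved. However, the scale you propose closes the gap by the same computation the paper uses. Put $c=\|f\|$ and $s^2=r_n^2-q_{n+1}^2c^2$. For each $i\in\{n+1,\dots,n+P+1\}$,
\[
 \|e_i\|^2=\|\bar v_i\|^2-q_i^2c^2<r_n^2-q_i^2c^2\le s^2,
\]
since $q_i\ge q_{n+1}$. Taking $i=n+1$ shows $s>0$. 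For $i\ne j$, Step 2 and Pythagoras give
\[
 \|e_j-e_i\|^2=\|\bar v_j-\bar v_i\|^2-(q_j-q_i)^2c^2\ge r_n^2-q_n^2c^2\ge s^2,
\]
since $|q_j-q_i|\le q_n<q_{n+1}$. Thus the $e_i/s$ are $P(E_\Lambda)+1$ points of the open unit ball of $E_\Lambda$ with mutual distances at least one, a contradiction. The case $E_\Lambda=\{0\}$ is covered by $P(\{0\})=1$. The paper's refined Lemma~\ref{lem:separation} does the same thing with scale $\sqrt{r_n^2-q_n^2c^2}$, using $q_i>q_n$ for the norm bound; either scale works.
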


For an inner-product norm this gives the index
$P(E_\Lambda)+1=\sigma_{m_\Lambda}^{>}+1$ when $m_\Lambda\geq1$.
In effective dimension two this index is six. The following result
improves it to five.
\begin{theorem}[Low-dimensional mixed recurrence]\label{thm:lowdim}
Suppose that $V$ is an inner-product space and
$m_\Lambda\in\{1,2\}$. Then
\[
    q_{n+2^{m_\Lambda}+1}>q_n+q_{n+1}
\]
whenever the left-hand side is defined. In both effective dimensions,
the index $2^{m_\Lambda}+1$ cannot be replaced by $2^{m_\Lambda}$,
even if the desired conclusion is weakened to a non-strict inequality.
\end{theorem}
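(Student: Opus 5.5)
The plan has two parts: the upper bound (index $2^{m}+1$ with $m=m_\Lambda$) and the sharpness examples.

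\emph{Setting up the packing argument.} I reuse the mechanism behind Proposition~\ref{prop:weak}. Fix $n$ and suppose, for contradiction, that $q_{n+2^m+1}\le q_n+q_{n+1}$. Consider the $2^m+2$ records $q_n<q_{n+1}<\dots<q_{n+2^m+1}$. For $j>i\ge n$ the difference $q_j-q_i$ lies in $[1,q_n+q_{n+1}-q_{n+1}]=[1,q_n]$ when $i\ge n+1$. Hence
\[
 \rho(q_j-q_i)\ge R(q_n)=r_{n-1}>r_n\quad\text{(or }\ge r_n\text{ when }q_j-q_i=q_n).
\]
Since $r_j<r_i\le r_n$, the triangle inequality gives $d_X(q_j\alpha,q_i\alpha)\ge r_n$ with $r_j<r_n$. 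Choose lifts $a_j=(v_j,u_j)$ of $q_j\alpha$ with $|a_j|_{\A}<r_n$. As in the proof of Theorem~\ref{thm:packing}, pass to $E_\Lambda$ by orthogonal projection.

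\emph{Disposing of the ultrametric part.} Let $a_j-a_i-\lambda$ come close to realizing $d_X$. Then $|u_j-u_i-\lambda_U|$ is either $<r_n$, in which case separation must come from the real part, or the ultrametric inequality \eqref{eq:ultrametric} forbids it. So the ultrametric component never separates, and we are left with points $w_j=\pi(v_j)$ in the open ball of radius $r_n$ of $E_\Lambda$, pairwise separated in the real quotient metric by at least $r_n$.

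\emph{The additional input in dimensions one and two.} A crude count only gives $P(E_\Lambda)$, which is $2$ for $m=1$ and $5$ for $m=2$. To reach index $2^m+1$ I must exploit the extra point $q_n$ and the fact that the differences $q_j-q_{n+1}\le q_n$ separate by strictly more than $r_{n-1}$. Equivalently, I show that at most $2^m+1$ of the indices $n,\dots,n+2^m+1$ can coexist.

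For $m=1$, the points $w_{n+1},\dots,w_{n+3}$ lie in $(-r_n,r_n)$ and are pairwise at distance $\ge r_n$, so there are at most two of them. Adding $w_n$, whose norm is exactly $r_n$ (up to the infimum), and using the strict bound $r_{n-1}>r_n$ for differences below $q_n$ forces a contradiction with four points.

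For $m=2$ I take the angular approach. The points $w_{n+1},\dots,w_{n+5}$ are five points of norm $<r_n$ that are pairwise $\ge r_n$ apart, so their pairwise angles are $>60^\circ$. Five such points are possible, which is why $\sigma_2^{>}=5$. Now include $q_n$. The differences $q_j-q_n$ are at most $q_{n+1}$, and for $j$ large relative to $q_n+q_{n+1}$ the return $q_j-q_n$ must itself exceed the record structure. I would use $q_j-q_n$ together with $q_{j}-q_{n+1}$ to produce a sixth direction with all angles $>60^\circ$, contradicting $\sigma_2^{>}=5$. This step is the main obstacle: it requires a careful planar angle computation in the style of Romanov and Shulga, tracking strict versus non-strict inequalities, and I expect it to be the core case analysis.

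\emph{Sharpness.} For sharpness I modify the solenoid example $(\R\times\Q_2)/\operatorname{diag}\Z[1/2]$ with $\alpha=(3/5,0)$, which gives $q_4=7<9=q_2+q_3$ and so settles $m=1$. For $m=2$ I take $(\R^2\times\Q_p)/\Lambda$ with a product-type subgroup and explicit rational $\alpha$, and compute best returns by hand (or with a short finite search) to exhibit $q_{n+4}<q_n+q_{n+1}$.
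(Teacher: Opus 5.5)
\textbf{There is a genuine gap: the effective-dimension-two case is not proved, and the step you propose for it is the one that fails in mixed quotients.}

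To use the preceding record $q_n$ as a sixth separated direction, you need a lift $e_0=(v_0,u_0)$ of $q_n\alpha$ whose \emph{real} part is separated from the real parts $v_j$ of the interior lifts. But $e_0$ may have $|u_0|_U=r_n$. Every interior lift has $|u_j|_U<r_n$, so ultrametricity then gives $|u_j-u_0|_U=r_n$. The inequality $\rho(q_j-q_n)\geq r_n$ is therefore already witnessed by the $U$-coordinate and says nothing about $\|v_j-v_0\|$.

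This is why Proposition~\ref{prop:archimedean} needs hypothesis~\eqref{eq:archimedean-condition}, and the solenoid shows the obstruction is real. There the only near-minimal lift of $3\alpha$ has $U$-norm $1/2=r_2$. If the boundary-vector argument worked, it would give $q_4\geq q_2+q_3$, which is false. Nor can the returns at $q_j-q_n$ or $q_j-q_{n+1}$ supply a new direction. For those you only have lower bounds, whereas a new point in the disc requires an \emph{upper} bound on some return.

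The missing idea is a subtraction argument that produces such an upper bound without ever lifting $q_n\alpha$. The paper proves (Lemmas~\ref{lem:angular} and~\ref{lem:five-point}) the following: if five points lie in the open unit disc with pairwise distances at least $1$, then any one of them, $c$, and its two angular neighbours $a,b$ satisfy $\|a+b-c\|_2\leq\max\{\|a\|_2,\|b\|_2,\|c\|_2\}$.

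Assume $q_{n+5}\leq q_n+q_{n+1}$ and put $t_\ell=q_{n+\ell}$. Apply the lemma to the $E_\Lambda$-components $x_1,\dots,x_5$ of near-optimal lifts $e_1,\dots,e_5$ of $t_1\alpha,\dots,t_5\alpha$, with $c=x_5$ and neighbours $x_i,x_j$, $i<j<5$. These points lie in the open disc of radius $s=\sqrt{r_n^2-q_n^2\kappa^2}$ and are $s$-separated, where $\kappa$ is the norm of the $E_\Lambda^\perp$-component of a lift of $\alpha$. Orthogonal projection alone does not preserve the separation $r_n$ you claim.

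Then $e_i+e_j-e_5$ represents $\mu\alpha$ with $\mu=t_i+t_j-t_5$, and the assumption gives $0<\mu<t_i$. Here ultrametricity works \emph{for} you: it bounds the $U$-part by $\max\{|u_i|_U,|u_j|_U,|u_5|_U\}$. The orthogonal splitting gives $\|x_i+x_j-x_5\|^2+\mu^2\kappa^2<(r_{n+i}+\varepsilon)^2-(t_i^2-\mu^2)\kappa^2$ for the real part. Hence $\rho(\mu)<r_{n+i}+\varepsilon$, a return beating the record $q_{n+i}$ at an earlier time. Choosing the slack $\varepsilon$ below $R(q_{n+i}-1)-r_{n+i}$ handles non-attainment of the infimum.

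Dimension one, by contrast, needs no extra input. Since $P(E_\Lambda)=2$, your three points $q_{n+1},q_{n+2},q_{n+3}$ already give the contradiction; this is Proposition~\ref{prop:weak}. Your appeal to $w_n$ there is unnecessary, and would be unjustified for the same ultrametric reason. Your solenoid example does correctly settle sharpness for $m_\Lambda=1$.

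Sharpness for $m_\Lambda=2$, however, needs an explicit, verified example with nontrivial $U$. In the purely real case, Proposition~\ref{prop:shulga-real} gives $q_{n+4}\geq q_n+q_{n+1}$, so no such example exists there. The paper uses $(\R^2\times\Z/256\Z)/(\Z^2\times\{0\})$ with $\alpha=((9/20,8/13),\bar1)$. Its consecutive records $44,80,96,104,120$ give $120<44+80$.

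A small slip: $R(q_n)=r_n$, not $r_{n-1}$.
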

The one-dimensional assertion follows from Proposition~\ref{prop:weak}.
The planar assertion uses a geometric fact about five separated points
in the open disc: if $c$ is one point and $a,b$ are its neighbours in
angular order, then
\[
 \|a+b-c\|_2\leq\max\{\|a\|_2,\|b\|_2,\|c\|_2\}.
\]
This supplies an additional subtraction argument beyond the packing
count. We prove the geometric fact and the recurrence in
Section~\ref{subsec:planar}. The solenoid above has effective dimension one and proves sharpness of
the index in that case. The example on
$\T^2\times(\Z/256\Z)$ has effective dimension two and proves
sharpness there.

Theorem~\ref{thm:growth} turns Theorem~\ref{thm:lowdim} into
\[
    g_N\leq2^{m_\Lambda}+2
    \qquad (m_\Lambda=1,2).
\]
This is a consequence of the recurrence, not an additional improvement
of the distance theorem: in effective dimension one the direct packing
bound is $3$, while in effective dimension two both arguments give $6$.
The new content of Theorem~\ref{thm:lowdim} is the sharp recurrence
index. In higher effective dimensions, Proposition~\ref{prop:weak}
already gives the same index whenever
$\sigma_{m_\Lambda}^{>}\leq2^{m_\Lambda}$. The first unresolved case is
effective dimension three.

The preceding record can sometimes be used to improve
Proposition~\ref{prop:weak}. The following criterion isolates exactly
the hypothesis needed to add its representative on the boundary of
the ball used in the packing argument.

\begin{proposition}[A criterion for the stronger packing recurrence]
\label{prop:archimedean}
Suppose that $V$ is $(\R^d,\|\cdot\|_\infty)$ or a $d$-dimensional
inner-product space. Let $q_{n+P(V)}$ be defined. If $q_n\alpha$ has a
representative $e_0=(v_0,u_0)\in V\times U$ such that
\begin{equation}\label{eq:archimedean-condition}
 |e_0|_{\A}=r_n,\qquad |u_0|_U<r_n,
\end{equation}
then
\[
 q_{n+P(V)}\geq q_n+q_{n+1}.
\]
\end{proposition}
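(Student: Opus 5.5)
We argue by contradiction, following the packing argument behind Proposition~\ref{prop:weak} and adding one extra point. Put $M=P(V)$ and suppose $q_{n+M}<q_n+q_{n+1}$. For $j=1,\dots,M$ set $k_j=q_{n+j}-q_n$. Then $1\le k_j<q_{n+1}$, so $\rho(k_j)\ge r_n$, because $q_{n+1}$ is the first strict record after $q_n$. We choose representatives $e_j=(v_j,u_j)$ of $q_{n+j}\alpha$ with $|e_j|_{\A}<r_n$; this is possible since $r_{n+j}<r_n$ and the infimum in \eqref{eq:quotient} may be approached. For $i\ne j$ we have $0<|q_{n+i}-q_{n+j}|<q_{n+1}$. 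Indeed, both indices lie in $(q_n,q_n+q_{n+1})$, and $q_{n+i}-q_{n+j}$ cannot be a multiple of nothing smaller unless the difference is below $q_{n+1}$. Hence every representative of the difference has norm $\ge r_n$, so $|e_i-e_j|_{\A}\ge r_n$ and the points $e_1,\dots,e_M$ are $r_n$-separated in the open ball of radius $r_n$. We do the same for $e_j-e_0$: its class is $k_j\alpha$ with $1\le k_j<q_{n+1}$, so $|e_j-e_0|_{\A}\ge r_n$ as well.

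The key step is to pass to the real factor. Since $|u_j|_U<r_n$ for $j\ge1$ and $|u_0|_U<r_n$ by \eqref{eq:archimedean-condition}, the ultrametric inequality \eqref{eq:ultrametric} gives $|u_i-u_j|_U<r_n$ for all $0\le i<j\le M$. The separation $\max\{\|v_i-v_j\|_V,|u_i-u_j|_U\}\ge r_n$ therefore forces $\|v_i-v_j\|_V\ge r_n$. This is exactly where the hypothesis $|u_0|_U<r_n$ enters. Without it, $e_0$ could be separated from the others only through $U$. Similarly $\|v_j\|_V<r_n$ for $j\ge1$, while $\|v_0\|_V=r_n$. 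After rescaling by $r_n$ we obtain $M$ points in the open unit ball of $V$ and one point $v_0$ on the unit sphere, pairwise at distance $\ge1$.

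It remains to show that such a configuration is impossible in the two named geometries; this is the main obstacle. For the maximum norm we use the $2^d$ sign regions, and here $P(V)=2^d$. Some coordinate of $v_0$ equals $\pm1$, say $v_0^{(1)}=1$. The points $v_j$ in the open cube that are $1$-separated in $\ell^\infty$ occupy distinct closed orthants, which one checks as in Corollary~\ref{cor:sup}, and there are $2^d$ of them, so all orthants are filled. The point in the orthant matching the signs of $v_0$, with sign $+$ in coordinate $1$, then has every coordinate within distance $<1$ of $v_0$. Indeed, coordinates of the same sign in $[-1,1]$ with one of them strictly inside differ by less than $1$; in coordinate $1$ the values $1$ and $t\in[0,1)$ differ by $1-t\le 1$, and we need strict inequality, so we handle $t=0$ by choosing orthant boundaries carefully, or else by a small perturbation argument. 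This gives a contradiction. For an inner-product norm we instead radially project: $1$-separated points in the closed unit ball, none at the origin except possibly one, give directions with pairwise $\langle\xi,\eta\rangle<\tfrac12$ (the standard computation). The boundary point $v_0$ has norm exactly $1$, and the strictness comes from the other points having norm $<1$. So the $M+1$ points yield a strong-kissing configuration larger than $P(V)$ allows, using the identification of $P(V)$ with $\sigma_d^{>}$ (or $1+\sigma_d^{>}$ accounting for the origin) from Corollary~\ref{cor:euclidean}. I expect the delicate part to be the treatment of a point at the origin and the equality cases in this angular estimate. These force $q_{n+M}\ge q_n+q_{n+1}$ rather than a strict inequality.
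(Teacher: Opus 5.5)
Your reduction is the same as the paper's. Assuming $q_{n+M}<q_n+q_{n+1}$, all differences among $q_n<q_{n+1}<\dots<q_{n+M}$ lie in $[1,q_{n+1})$. The ultrametric components then differ pairwise by less than $r_n$, which is where $|u_0|_U<r_n$ enters. After rescaling you get $\|v_0\|_V=1$, $\|v_i\|_V<1$ for $i\ge1$, and $\|v_i-v_j\|_V\ge1$. The gap is in the final geometric step, which is where the boundary vector must be handled. In the maximum-norm case you leave that step open.

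\textbf{Maximum norm.} Take a fixed convention such as ``zero counted as nonnegative''. The interior point $w$ in the orthant of $v_0$ may then have $w_k=0$ in a coordinate where $(v_0)_k=1$. In that case $\|w-v_0\|_\infty=1$ and there is no contradiction. Your assertion that same-sign coordinates, one strictly inside, differ by less than $1$ fails exactly here (take $0$ and $1$). A perturbation is not available as it stands, since every separation is non-strict. The missing idea is a sign convention adapted to $v_0$:
\begin{itemize}
\item give $v_0$ the signs of its coordinates, with zero counted as $+$;
\item keep the signs of the nonzero coordinates of each interior $v_i$;
\item give a zero coordinate of $v_i$ the sign \emph{opposite} to that of $v_0$ in that coordinate.
\end{itemize}
Two interior vectors with the same pattern then have, in each coordinate, both entries in $[0,1)$ or both in $(-1,0]$, so their distance is $<1$. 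An interior vector with the pattern of $v_0$ must be nonzero with the same sign wherever $(v_0)_k\neq0$. Hence $|(v_i)_k-(v_0)_k|<1$ in every coordinate. Pigeonhole on $2^d$ patterns and $2^d+1$ vectors then finishes.

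\textbf{Inner-product norm.} Your outline is right, but the hedge about the origin must be removed rather than accommodated. By Lemma~\ref{lem:packing-values}, $P(V)=\sigma_d^{>}$ exactly. If one of the $M+1$ points were $0$, you would obtain only $\sigma_d^{>}$ directions and no contradiction. In fact no point is $0$: $\|v_0\|_V=1$, and an interior $v_i=0$ would be at distance $\|v_j\|_V<1$ from another interior $v_j$, which exists because $M=\sigma_d^{>}\ge2$. Write $v_i=a_i\xi_i$ and take a pair with $a_i\le a_j$ and $\langle\xi_i,\xi_j\rangle\ge\tfrac12$. Then $\|v_i-v_j\|^2\le a_j^2-a_i(a_j-a_i)<1$. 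This holds both when $a_j<1$ and when $a_j=1$, since in the latter case $0<a_i<1$ because only $v_0$ lies on the sphere. So the $\sigma_d^{>}+1$ directions form a strong kissing configuration, which is impossible.

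Finally, the conclusion is non-strict not because of equality cases in the angular estimate. It is non-strict because the argument needs $q_{n+M}-q_n<q_{n+1}$. If $q_{n+M}=q_n+q_{n+1}$, then $e_M-e_0$ represents $q_{n+1}\alpha$, whose return $r_{n+1}<r_n$ gives no separation.
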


Both counterexamples above have an ultrametric component equal to the
record distance at the critical index. They illustrate why the extra
boundary vector is unavailable in the mixed packing proof. They do
not assert that every record with this property violates a recurrence.

Table~\ref{tab:comparison} summarizes the relation with the main
results in the literature. Whenever an earlier theorem is a direct
specialization of one of our results, the last column records the
corresponding choice of $V$, $U$ and $\Lambda$; otherwise it states the
precise comparison. The table concerns uniform upper bounds and
recurrence statements, rather than sharpness, almost-everywhere
behaviour or limiting results.

\begin{table}[!ht]
\centering
\small
\setlength{\tabcolsep}{4pt}
\renewcommand{\arraystretch}{1.16}
\begin{tabularx}{\textwidth}{@{}L{.20\textwidth}L{.20\textwidth}L{.17\textwidth}Y@{}}
\toprule
Work & Setting & Statement & Specialization / comparison\\
\midrule
Chevallier \cite{Chevallier1996}
& $\R^2/\Z^2$, maximum norm
& $g_N\leq5$
& Take $V=(\R^2,\|\cdot\|_\infty)$, $U=\{0\}$ and
$\Lambda=\Z^2$ in Corollary~\ref{cor:sup}.\\[3pt]

Haynes--Ramirez \cite{HaynesRamirez2021}
& Real $d$-tori, maximum norm
& $g_N\leq2^d+1$
& Take $V=(\R^d,\|\cdot\|_\infty)$, $U=\{0\}$ and
$\Lambda=\mathcal L$ full-rank in Corollary~\ref{cor:sup}.
\\[3pt]

Das--Haynes \cite{DasHaynes2023}
& Adelic tori with their quotient metric
& $g_N\leq3$
& Take $V=\R$,
$U=\prod_{p\in\mathcal P}'\Q_p$ and
$\Lambda=\operatorname{diag}\Z[1/p:p\in\mathcal P]$, with the
ultrametric described above; Corollary~\ref{cor:sup} then gives
$g_N\leq3$.\\[3pt]

Haynes--Marklof \cite{HaynesMarklof2022}
& Real Euclidean tori
& $5$ if $d=2$;\newline $K_d+1$ if $d\geq3$
& For $d=2$, take $U=\{0\}$ and $r=2$ in
Proposition~\ref{prop:shulga-real}, giving $g_N\leq5$.
For $d\geq3$, a full-rank lattice has $m_\Lambda=d$, so
Corollary~\ref{cor:euclidean} gives
$g_N\leq\sk+1\leq K_d+1$.\\[3pt]

Shulga: angular distance bound \cite{Shulga2026}
& Real inner-product tori
& $g_N\leq\sk+1$
& Take $U=\{0\}$ and $\Lambda=\mathcal L$ full-rank in
Corollary~\ref{cor:euclidean}.\\[3pt]

Shulga: distance theorem \cite{Shulga2026}
& Real inner-product tori
& $g_N\leq2^d+1$
& Take $U=\{0\}$ and $\Lambda=\mathcal L$ full-rank, so $r=d$, in
Proposition~\ref{prop:shulga-real}. \\[3pt]

Lagarias; Romanov; Shulga
\cite{Lagarias1982,Romanov2006,Shulga2026}
& Real tori: maximum norm; Euclidean $d=2$; inner-product norms,
respectively
& $q_{n+2^d}\geq$\newline $q_n+q_{n+1}$
& Lagarias: take $V=(\R^d,\|\cdot\|_\infty)$ and $U=\{0\}$ in
Proposition~\ref{prop:archimedean}.
Romanov and Shulga: take
$U=\{0\}$ and respectively $r=2$ or $r=d$ in
Proposition~\ref{prop:shulga-real}. For mixed quotients the
$2^d$-step recurrence can fail; Theorem~\ref{thm:lowdim} gives the
sharp one-index weakening in effective Euclidean dimensions one and two.\\[3pt]

Chevallier; Shutov \cite{ChevallierApproximations1996,Shutov2024}
& Best returns on real tori
& Record counts and growth criteria
& Take $X=\R^d/\Lambda$ in Lemma~\ref{lem:running} and
Theorem~\ref{thm:growth}.\\
\bottomrule
\end{tabularx}
\caption{Earlier distance and denominator-growth results as special
cases of the present paper.
Here $K_d$ is the ordinary kissing number, $\sk$ the strong kissing
number, and $r=\operatorname{rank}(\Lambda/\Lambda^\circ)$. The
counterexamples to the mixed denominator recurrence are verified in
Section~\ref{subsec:examples}.}
\label{tab:comparison}
\end{table}

All proofs follow in Section~\ref{sec:proofs}. We first establish the
packing and record-count statements, the removal of continuous and free
real directions, and the projection reduction behind
Proposition~\ref{prop:shulga-real}; we then prove
Proposition~\ref{prop:archimedean}, the planar geometric lemma and its
recurrence, and finally the counterexamples. We close with the remaining
low-dimensional questions.

\clearpage
\section{Proofs}\label{sec:proofs}

\subsection{The geometric constants}

\begin{lemma}\label{lem:packing-values}
For every $d$-dimensional normed space $V$ with $d\geq1$,
$P(V)\leq3^d-1$.
Moreover,
\[
 P(\R^d,\|\cdot\|_\infty)=2^d,
 \qquad
 P(\R^d,\|\cdot\|_2)=\sk.
\]
\end{lemma}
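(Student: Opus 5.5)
We handle the three assertions separately, each by a short packing or sign-region argument.

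\emph{General bound $P(V)\leq 3^d-1$.} We use a volume argument with Lebesgue measure on $V\cong\R^d$. Let $E\subset B_V(0,1)$ be $1$-separated. The open balls $B_V(e,1/2)$, $e\in E$, are pairwise disjoint, and they lie in $B_V(0,3/2)$. Comparing volumes gives $\card E\cdot 2^{-d}\leq (3/2)^d$, that is, $\card E\leq 3^d$.

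To remove one point, note that every ball $B_V(e,1/2)$ is contained in the closed set $\{\|x\|\leq \|e\|+1/2\}$. Since $\|e\|<1$ and $E$ is finite, all these balls lie in $B_V(0,\rho)$ with $\rho=\max_e\|e\|+1/2<3/2$. Hence $\card E\leq(2\rho)^d<3^d$, and therefore $\card E\leq 3^d-1$. The same argument, applied to an arbitrary finite admissible set, also shows that the maximum in the definition of $P(V)$ is finite.

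\emph{Maximum norm.} For the lower bound, choose a small $\varepsilon>0$ and take the $2^d$ points $(\pm(1-\varepsilon)/2,\ldots,\pm(1-\varepsilon)/2)$; this does not quite give separation $1$, since two such points differing in one coordinate are at distance $1-\varepsilon$. Instead take the points with coordinates in $\{-1+\varepsilon,\,\varepsilon\}$. Distinct points then differ by exactly $1$ in some coordinate, so they are $1$-separated, and they lie in the open cube.

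For the upper bound, split the open cube $(-1,1)^d$ into the $2^d$ sign regions $\prod_i I_i$, with each $I_i$ equal to $(-1,0)$ or $[0,1)$. Any two points in the same region differ by less than $1$ in every coordinate. A $1$-separated set therefore has at most one point per region, which gives $P=2^d$.

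\emph{Euclidean norm.} This is the expected main obstacle, because of the strict inequality and the origin. We show $P(\R^d,\|\cdot\|_2)\geq\sk$ by taking a strong kissing configuration $C$ and scaling it: let $E=tC$ with $t<1$ close to $1$. Then $\|t\xi-t\eta\|^2=2t^2(1-\langle\xi,\eta\rangle)$. Since $C$ is finite, there is a margin $\langle\xi,\eta\rangle\leq 1/2-\eta_0$, so the right-hand side is at least $1$ for $t$ close to $1$.

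Conversely, let $E$ be $1$-separated in the open unit ball. The origin $0$ can belong to $E$ only if every other point has norm $\geq1$, which is impossible; so if $0\in E$ then $E=\{0\}$, and $1\leq\sk$. Otherwise, for distinct $a,b\in E$ with $0<\|a\|,\|b\|<1$, write $\cos\theta$ for the cosine of the angle between them. Then
\[
1\leq\|a-b\|^2=\|a\|^2+\|b\|^2-2\|a\|\|b\|\cos\theta .
\]
If $\cos\theta\geq1/2$, then, assuming without loss of generality $\|b\|\leq\|a\|$,
\[
\|a-b\|^2\leq \|a\|^2+\|b\|^2-\|a\|\|b\|=\|a\|^2-\|b\|(\|a\|-\|b\|)\leq\|a\|^2<1,
\]
a contradiction. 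Hence $\cos\theta<1/2$. In particular the directions $a/\|a\|$ are distinct, and radial projection gives a strong kissing configuration of size $\card E$. This yields $P=\sk$.
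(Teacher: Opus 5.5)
Your proposal is correct and follows the paper's route for all three parts. The bound uses the same volume comparison with radius $\max_{e}\|e\|+1/2<3/2$, the maximum norm uses the $2^d$ sign regions of $(-1,1)^d$, and the Euclidean case uses radial projection with the same estimate $\|a\|^2+\|b\|^2-\|a\|\|b\|\le\|a\|^2<1$, together with rescaling a strong kissing configuration for the converse. The only difference is cosmetic: the paper takes $\{-1/2,1/2\}^d$ as the cube example, which already lies in the open cube with separation exactly $1$, so your abandoned $(1-\varepsilon)/2$ attempt and the $\varepsilon$-shifted set $\{-1+\varepsilon,\varepsilon\}^d$ are unnecessary.
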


\begin{proof}
Let $E$ be a finite configuration in \eqref{eq:packing}, and put
$r=\max_{v\in E}\|v\|_V<1$. The open balls of radius $1/2$ centred at
the points of $E$ are disjoint. They all lie in the ball of radius
$r+1/2$ centred at the origin. Comparing $d$-dimensional volumes gives
\[
 \card E\leq(2r+1)^d<3^d.
\]
An infinite admissible set would contain arbitrarily large finite
admissible subsets, so it is impossible. The set of attainable
cardinalities is a non-empty bounded set of integers and has a maximum.

For the maximum norm, partition $(-1,1)^d$ into $2^d$ regions according
to the signs of the coordinates, assigning zero to the non-negative
side. Two points in the same region have maximum-norm distance strictly
less than $1$. Therefore every admissible set has at most $2^d$ points.
The vertices of $\{-1/2,1/2\}^d$ form an admissible set of that size.
An invertible linear isometry preserves the packing number, proving the
assertion for linear images of the cube as well.

For the Euclidean norm, let $v_1,\ldots,v_m$ be an admissible set.
The case $m=1$ is immediate. If $m\geq2$, no $v_i$ is zero, because
its distance to every other point would be less than $1$. Write
\[
 v_i=a_i\xi_i,\qquad 0<a_i<1,\quad\|\xi_i\|_2=1.
\]
For a pair of indices, interchange them if necessary so that $a_i\leq a_j$.
If $\langle\xi_i,\xi_j\rangle\geq1/2$, then
\[
 \|v_i-v_j\|_2^2
 \leq a_i^2+a_j^2-a_i a_j
 =a_j^2+a_i(a_i-a_j)
 \leq a_j^2<1,
\]
contrary to admissibility. Thus the radial projections form a strong
kissing configuration, and $m\leq\sk$.

Conversely, a configuration $C\subset S^{d-1}$ counted by $\sk$ has
pairwise distances strictly greater than $1$. Since it is finite,
there is $t<1$, sufficiently close to $1$, such that all pairwise
distances in $tC$ remain at least $1$. Hence $tC$ is admissible in the
open unit ball and $P(\R^d,\|\cdot\|_2)\geq\sk$.
\end{proof}

The values $\sigma_1^{>}=2$ and $\sigma_2^{>}=5$ follow respectively from
the two-point sphere and the sum of angles around a circle. A regular
pentagon gives the lower bound in dimension two. In dimension three,
the ordinary kissing number is $12$; see \cite{HaynesMarklof2022}.
The twelve vertices of a regular icosahedron, normalized to lie on the
unit sphere, have maximal inner product $1/\sqrt5<1/2$. They therefore
give $\sigma_3^{>}=12$.

\begin{remark}
The open ball in Definition~\ref{def:packing} is essential. A closed
unit ball even permits its centre together with an ordinary spherical
kissing configuration. It is not legitimate to replace the open-ball
packing number by a closed-ball version without changing the constant.
\end{remark}

\subsection{Separation of improvements}

\begin{lemma}[Removing the continuous part]\label{lem:continuous-part}
For every closed subgroup $\Lambda\subset V\times U$, its identity
component has the form $\Lambda^\circ=W\times\{0\}$ with $W$ a
real vector subspace of $V$. Equip $\overline V=V/W$ with the quotient
norm and let $\overline\Lambda$ be the image of $\Lambda$ in
$\overline V\times U$. Then $\overline\Lambda$ is closed and the
natural map
\[
 (V\times U)/\Lambda\longrightarrow
 (\overline V\times U)/\overline\Lambda
\]
is an isometric group isomorphism. If $V$ is an inner-product space,
one may identify $\overline V$ with $W^\perp$ and
$\overline\Lambda$ with $\Lambda\cap(W^\perp\times U)$.
\end{lemma}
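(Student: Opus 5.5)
The proof has four steps: identify the identity component, check that the image is closed, check that the natural map is an isometric isomorphism, and handle the inner-product identification.

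\emph{Identity component.} Let $\Lambda^\circ$ be the identity component of $\Lambda$; it is a closed connected subgroup of $V\times U$. Its image under the continuous projection to $U$ is connected and contains $0$. An ultrametric abelian group is totally disconnected, since every open ball is also closed by \eqref{eq:ultrametric}. Hence the projection is $\{0\}$ and $\Lambda^\circ=W\times\{0\}$ for a closed connected subgroup $W\subset V$. A closed subgroup of a finite-dimensional real vector space is of the form $W_0\oplus D$ with $W_0$ a subspace and $D$ discrete. Connectedness forces $D=0$, so $W$ is a linear subspace. I would also check that this $W$ coincides with the one in Definition~\ref{def:reduced-space}. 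If $(tv,0)\in\Lambda$ for all $t$, the line through $v$ is a connected subset of $\Lambda$ containing $0$, so $v\in W$. Conversely $W\times\{0\}\subset\Lambda$.

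\emph{Closedness of the image.} Let $p:V\times U\to\overline V\times U$ be the quotient map. Since $W\times\{0\}\subset\Lambda$, we have $\Lambda=p^{-1}(p(\Lambda))$. The map $p$ is a quotient map of topological groups, hence open. Therefore
\[
 \overline\Lambda^{c}=p\bigl((V\times U)\setminus\Lambda\bigr)
\]
is open, and $\overline\Lambda=p(\Lambda)$ is closed. This step is routine.

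\emph{Isometric isomorphism.} The natural map $a+\Lambda\mapsto p(a)+\overline\Lambda$ is a well-defined surjective homomorphism. Its kernel is $p^{-1}(\overline\Lambda)/\Lambda=0$, again because $\Lambda$ is saturated. For the isometry, I would show that for $a=(v,u)$,
\[
 \inf_{\lambda\in\Lambda}|a-\lambda|_{\A}
 =\inf_{\mu\in\overline\Lambda}\max\{\|\bar v-\bar\mu_V\|_{\overline V},|u-\mu_U|\}.
\]
The inequality ``$\geq$'' holds because the quotient norm does not exceed the norm. For ``$\leq$'', fix $\mu=p(\lambda)$ and $\varepsilon>0$. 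Choose $w\in W$ with
\[
 \|v-\lambda_V-w\|_V<\|\overline{v-\lambda_V}\|_{\overline V}+\varepsilon.
\]
Then $\lambda+(w,0)\in\Lambda$ has the same $U$-coordinate as $\lambda$, and its distance to $a$ is within $\varepsilon$ of the right-hand expression. The key point is that the maximum product metric splits, so the $U$-coordinate is untouched when we adjust by $W$.

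\emph{Inner-product case.} Identify $\overline V$ with $W^\perp$ via orthogonal projection $\pi$; the quotient norm of $\bar v$ equals $\|\pi v\|$. Then $\overline\Lambda$ corresponds to $\{(\pi v,u):(v,u)\in\Lambda\}$. Since $(v,u)-((v-\pi v),0)=(\pi v,u)$ and $(v-\pi v,0)\in W\times\{0\}\subset\Lambda$, this set equals $\Lambda\cap(W^\perp\times U)$.

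\emph{Main obstacle.} The only non-formal input is the structure of the identity component. This relies on total disconnectedness of $U$ and on the classification of closed subgroups of $\R^d$, which I will quote.
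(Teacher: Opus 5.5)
Your proposal is correct and follows essentially the same route as the paper. Both arguments use the same four ingredients: total disconnectedness of $U$ together with the structure of closed subgroups of $V$ for the identity component; openness of the quotient map and saturation of $\Lambda$ for closedness of $\overline\Lambda$; the fibrewise identity $\inf_{w\in W}\max\{\|v-w\|_V,|u|_U\}=\max\{\|v+W\|_{V/W},|u|_U\}$ for the isometry, which you spell out with an explicit $\varepsilon$-adjustment; and subtraction of the $W$-component in the inner-product case. Your extra check that $\Lambda^\circ$ coincides with the $W$ of Definition~\ref{def:reduced-space} is a useful detail that the paper leaves implicit.
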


\begin{proof}
Every connected subset of an ultrametric space is a singleton, so
$\Lambda^\circ\subset V\times\{0\}$. The identity component of a
closed additive subgroup of $V$ is a vector subspace. Applying this
fact to $\Lambda\cap(V\times\{0\})$ gives the asserted form of
$\Lambda^\circ$.

The quotient map $V\times U\to(V/W)\times U$ is open, and the
inverse image of $\overline\Lambda$ is $\Lambda$, since
$W\times\{0\}\subset\Lambda$. Thus $\overline\Lambda$ is closed.
The quotient norm satisfies
\[
 \inf_{w\in W}\max\{\|v-w\|_V,|u|_U\}
 =\max\{\|v+W\|_{V/W},|u|_U\}.
\]
Taking the further infimum over $\Lambda$ proves the isometry.
For an inner-product norm, $\|v+W\|_{V/W}=\|\pi(v)\|$, where
$\pi$ is orthogonal projection onto $W^\perp$. Subtracting the
$W$-component of any $(v,u)\in\Lambda$ shows that its image is
$(\pi(v),u)\in\Lambda\cap(W^\perp\times U)$.
\end{proof}

We prove a slightly more flexible version of the dyadic argument, which
will also give Proposition~\ref{prop:weak}.

\begin{lemma}[Separation in a short interval]\label{lem:separation}
Let $Q\geq1$ with $R(Q)>0$. Suppose that $t_1,\ldots,t_m$ are distinct
positive integers satisfying
\[
 |t_i-t_j|\leq Q\quad(i\ne j),\qquad
 \rho(t_i)<R(Q)\quad(1\leq i\leq m).
\]
Then $m\leq P(V)$.
Under either compatible splitting hypothesis in Remark~\ref{rem:effective-factor},
one has the sharper bound $m\leq P(E)$.
\end{lemma}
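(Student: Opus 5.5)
Write $R=R(Q)>0$ and choose, for each $i$, a representative $a_i=(v_i,u_i)\in\A$ of $t_i\alpha$ with $|a_i|_{\A}<R$. This is possible because $\rho(t_i)<R$ is defined as an infimum. Attainment is never needed, since the inequality is strict.

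\textbf{Separation.} For $i\ne j$, the difference $a_i-a_j$ represents $(t_i-t_j)\alpha$. Because $X$ is a group, $\rho(-q)=\rho(q)$. Since $1\le|t_i-t_j|\le Q$, we get
\[
|a_i-a_j|_{\A}\geq\rho(|t_i-t_j|)\geq R(Q)=R.
\]
Now $|a_i-a_j|_{\A}=\max\{\|v_i-v_j\|_V,|u_i-u_j|_U\}$. By the ultrametric inequality \eqref{eq:ultrametric},
\[
|u_i-u_j|_U\le\max\{|u_i|_U,|u_j|_U\}<R.
\]
So the maximum must be achieved by the real part, giving $\|v_i-v_j\|_V\geq R$. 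Meanwhile $\|v_i\|_V<R$. Rescaling by $1/R$ yields $m$ points in the open unit ball of $V$ that are pairwise at distance $\ge1$. Distinctness of the $v_i$ is automatic from the separation. Hence $m\le P(V)$.

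\textbf{Splitting case.} Under the hypotheses of Remark~\ref{rem:effective-factor}, first pass to $\overline V$ via Lemma~\ref{lem:continuous-part}. That lemma gives an isometric isomorphism, so $\rho$, $R$ and the hypotheses are unchanged. We may therefore assume $V=\overline V=E\oplus F$ and $\Lambda\subset E\times U$.

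The key step, and the main obstacle, is to get rid of the $F$-component. Write $v_i=e_i+f_i$. Since $\Lambda$ has no $F$-component, the $F$-part of any representative of $q\alpha$ equals $qf$, where $f$ is the $F$-component of a fixed lift of $\alpha$. Thus $f_i=t_if$ for all $i$, and $f_i-f_j=(t_i-t_j)f$.

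In both splitting cases the norm is monotone in the components. In the max case this is immediate. In the orthogonal case it is Pythagoras: $\|e\|\le\|e+f\|$ and $\|f\|\le\|e+f\|$. Hence $\|f_i\|<R$ for each $i$.

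We now want $\|e_i-e_j\|_E\ge R$. Apply the separation step instead to the modified representatives $a_i'=(e_i,u_i)$ in the quotient $(E\times U)/\Lambda$, with $\alpha'$ the image of $\alpha$. Two facts are needed:
\begin{enumerate}
\item $\rho'(q)\le\rho(q)$, by monotonicity of the norm in the components.
\item $\rho'(q)\ge R$ for $1\le q\le Q$.
\end{enumerate}
The second fact is the delicate one, because $\rho$ includes the contribution $\|qf\|$. If $\rho'(q)<R$ for some $q\le Q$, we could not immediately conclude $\rho(q)<R$, since $\|qf\|$ may be large.

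So the argument needs additional structure. Here is what I would try first. Apply the separation to the pairs directly, and use that $\|(t_i-t_j)f\|$ is controlled by the differences of the $f_i$.
\begin{itemize}
\item \emph{Max case.} We have $\|f_i-f_j\|_F$ in the max-norm sense, but $F$ itself is not ultrametric, so $\|f_i-f_j\|_F<R$ is not automatic. Instead, order the $t_i$ and compare consecutive ones, or alternatively pass to a limit along $f\mapsto0$ direction. If these fail, bound $m$ by the number of sign/angular regions in $E$ as in Lemma~\ref{lem:packing-values}.
\item \emph{Orthogonal case.} Use $\|a_i-a_j\|^2=\|e_i-e_j\|^2+(t_i-t_j)^2\|f\|^2$ together with an averaging or angular argument. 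The goal is that the radial projections of the $e_i$ still form a strict kissing configuration, i.e.\ $m\le\sigma^{>}_{\dim E}=P(E)$.
\end{itemize}

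I expect this reduction from $V$ to $E$ to be the genuine difficulty; the $P(V)$ bound itself is routine.
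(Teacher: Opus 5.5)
Your proof of $m\le P(V)$ is correct and is the paper's argument. The strict inequalities give lifts with $|a_i|_{\A}<R$, ultrametricity makes the $U$-part of $a_i-a_j$ smaller than $R$, and rescaling by $1/R$ gives an admissible packing. The refinement $m\le P(E)$, however, is not proved. You correctly see that the auxiliary quotient $(E\times U)/\Lambda$ does not work, but you then only list candidate strategies (comparing consecutive $t_i$, a limit $f\to0$, sign regions, ``averaging or angular'' arguments) without carrying any out. So the second assertion of the lemma has a genuine gap.

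The missing ingredient is short, and you already have half of it, namely $f_i=t_if$.

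\textbf{Step 1: $t_i>Q$.} The hypothesis $\rho(t_i)<R(Q)=\min_{1\le q\le Q}\rho(q)$ forces $t_i>Q$ for every $i$. Hence $|t_i-t_j|\le Q<t_i$.

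\textbf{Step 2: the $F$-parts are close.} Your remark that $\|f_i-f_j\|_F<R$ is ``not automatic'' because $F$ is not ultrametric overlooks that all $f_i$ lie on the single ray $\R_{>0}f$. With $c=\|f\|_F$ you get $\|f_i-f_j\|_F=|t_i-t_j|\,c\le Qc\le t_ic=\|f_i\|_F<R$.

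\textbf{Maximum-product case.} Since $\max\{\|e_i-e_j\|_E,\|f_i-f_j\|_F\}\ge R$ and the second entry is less than $R$, you get $\|e_i-e_j\|_E\ge R$. Also $\|e_i\|_E\le\|v_i\|<R$, so $e_i/R$ is an admissible packing in $E$.

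\textbf{Orthogonal case.} Pythagoras gives $\|e_i\|^2<R^2-t_i^2c^2$ and $\|e_i-e_j\|^2\ge R^2-(t_i-t_j)^2c^2$. Put $s=\sqrt{R^2-Q^2c^2}$, which is positive because $Qc\le t_ic<R$. Then $\|e_i\|<s\le\|e_i-e_j\|$, so $e_i/s$ is admissible in $E$, including when $E=\{0\}$.

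The step your proposal lacks is this shrinking of the radius from $R$ to $s$. It is uniform in $i$ because every $t_i$ exceeds $Q$ while every difference $|t_i-t_j|$ is at most $Q$. No angular, averaging or limiting argument is needed.
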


\begin{proof}
Put $r=R(Q)$ and choose a representative $a\in\A$ of $\alpha$.
For each $i$, the strict inequality $\rho(t_i)<r$ allows us to choose
$\lambda_i\in\Lambda$ with
\[
 e_i=t_i a-\lambda_i=(v_i,u_i),\qquad
 \|v_i\|_V<r,\quad |u_i|_{\U}<r.
\]
For $i\ne j$, the definition of $r$ gives
\[
 \rho(|t_i-t_j|)\geq r.
\]
On the other hand,
\[
 |u_i-u_j|_{\U}\leq\max\{|u_i|_{\U},|u_j|_{\U}\}<r.
\]
Since $\lambda_i-\lambda_j\in\Lambda$, the vector $e_i-e_j$ represents
$(t_i-t_j)\alpha$. Therefore
\[
 r\leq\rho(|t_i-t_j|)
 \leq\max\{\|v_i-v_j\|_V,|u_i-u_j|_{\U}\},
\]
and consequently $\|v_i-v_j\|_V\geq r$.
The points $v_i/r$ lie in the open unit ball of $V$ and have pairwise
distances at least $1$. Definition~\ref{def:packing} gives $m\leq P(V)$.

For the refinement, assume that $m\geq1$ and write the real component
of $a$ as $a_E+b$, with $a_E\in E$ and $b\in F$. Since
$\Lambda\subset E\times U$, the chosen lifts have
\[
 v_i=x_i+t_i b,\qquad x_i\in E.
\]
Put $c=\|b\|_F$. Notice that every $t_i>Q$, since
$\rho(t_i)<R(Q)$.

\emph{Orthogonal inner-product decomposition.}
The lift inequalities and the separation just proved give
\[
 \|x_i\|^2+t_i^2c^2<r^2,\qquad
 \|x_i-x_j\|^2+(t_i-t_j)^2c^2\geq r^2.
\]
In particular, $r>Qc$, and the number $s=\sqrt{r^2-Q^2c^2}$ is
positive. Since $t_i>Q$ and $|t_i-t_j|\leq Q$, we obtain
\begin{equation}\label{eq:reduced-separation}
 \|x_i\|<s,\qquad \|x_i-x_j\|\geq s\quad(i\ne j).
\end{equation}
The vectors $x_i/s$ therefore form an admissible packing in $E$,
proving $m\leq P(E)$, including when $E=\{0\}$.

\emph{Maximum-product decomposition.}
Here $\|x_i\|_E<r$ and $t_i c<r$. Thus
$\|(t_i-t_j)b\|_F\leq Qc<r$, whereas
$\|v_i-v_j\|_V\geq r$. The maximum-product norm forces
$\|x_i-x_j\|_E\geq r$. Applying the definition of $P(E)$ to
$x_i/r$ again gives the result.
\end{proof}

\begin{corollary}[Dyadic improvement bound]\label{cor:dyadic}
If $Q\geq1$ and $R(Q)>0$, then
\begin{equation}\label{eq:dyadic}
 \card\{q\in\Z:Q<q\leq2Q,\ \rho(q)<R(Q)\}\leq P(V).
\end{equation}
In particular, $R$ has at most $P(V)$ strict decreases on $(Q,2Q]$.
Under either compatible splitting hypothesis in Remark~\ref{rem:effective-factor},
both assertions hold with $P(E)$ in place of $P(V)$.
\end{corollary}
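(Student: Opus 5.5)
This follows directly from Lemma~\ref{lem:separation}; the only work is to check that the integers in \eqref{eq:dyadic} satisfy its hypotheses. Let $t_1,\ldots,t_m$ be the distinct integers $q$ with $Q<q\leq2Q$ and $\rho(q)<R(Q)$. Each $t_i$ is positive, since $t_i>Q\geq1$. For $i\ne j$ both $t_i$ and $t_j$ lie in the half-open interval $(Q,2Q]$, so $|t_i-t_j|<Q$, and in particular $|t_i-t_j|\leq Q$. By assumption $\rho(t_i)<R(Q)$ and $R(Q)>0$. Lemma~\ref{lem:separation} therefore gives $m\leq P(V)$. Under either splitting hypothesis of Remark~\ref{rem:effective-factor}, it gives $m\leq P(E)$. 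Since only finitely many such $t_i$ exist, the count is bounded, which proves \eqref{eq:dyadic}.

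For the statement about strict decreases, suppose $R$ strictly decreases at $q\in(Q,2Q]$, meaning $R(q)<R(q-1)$. Because $R(q)=\min\{R(q-1),\rho(q)\}$, this forces $\rho(q)=R(q)<R(q-1)$. Since $R$ is non-increasing and $q-1\geq Q$, we have $R(q-1)\leq R(Q)$, so $\rho(q)<R(Q)$. Hence every point of strict decrease of $R$ in $(Q,2Q]$ is one of the integers counted in \eqref{eq:dyadic}. Distinct decrease points are distinct integers, so there are at most $P(V)$ of them, or at most $P(E)$ in the split case.

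I expect no real obstacle. The one point needing care is that a decrease of $R$ at $q$ gives only $\rho(q)<R(q-1)$, not directly $\rho(q)<R(Q)$. Monotonicity of $R$ together with $q-1\geq Q$ supplies the comparison $R(q-1)\leq R(Q)$. When $q-1=Q$ the two quantities coincide, so the argument also covers the left endpoint of the interval.
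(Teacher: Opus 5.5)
Your proposal is correct and follows the paper's proof: apply Lemma~\ref{lem:separation} to the integers in $(Q,2Q]$, whose pairwise differences are less than $Q$. Then note that a strict decrease of $R$ at $t>Q$ forces $\rho(t)=R(t)<R(t-1)\leq R(Q)$, so every such decrease is among the counted integers.
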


\begin{proof}
Any two integers in $(Q,2Q]$ differ by less than $Q$, so
Lemma~\ref{lem:separation} applies. If $R(t)<R(t-1)$ with $t>Q$,
then $\rho(t)=R(t)<R(Q)$; hence every strict decrease is counted in
\eqref{eq:dyadic}.
\end{proof}

\subsection{Running minima and the distance theorem}

\begin{lemma}[Running-minimum and record-count identities]
\label{lem:running}
In a translation-invariant metric abelian group, suppose that
$0,\alpha,\ldots,(N-1)\alpha$ are distinct. Put
$Q=\lfloor N/2\rfloor$. Then, for $0\leq j\leq N-1$,
\begin{equation}\label{eq:running}
 \delta_N(j\alpha)=R\bigl(\max\{j,N-1-j\}\bigr).
\end{equation}
Moreover,
\begin{equation}\label{eq:exact-count}
 g_N(\alpha)
 =\card\{R(t):Q\leq t\leq N-1,\ t\in\Z\}
 =1+\card\{n:Q<q_n\leq N-1\}.
\end{equation}
\end{lemma}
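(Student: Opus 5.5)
The plan is to prove \eqref{eq:running} first and then deduce both equalities in \eqref{eq:exact-count} from monotonicity of $R$.

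\emph{Running-minimum identity.} By translation invariance,
\[
 d_X(j\alpha,k\alpha)=d_X((k-j)\alpha,0)=\rho(|k-j|),
\]
using $d_X(-x,0)=d_X(0,x)$. For fixed $j$, the displacements $k-j$ with $0\leq k\leq N-1$ and $k\neq j$ have absolute values covering exactly $\{1,\ldots,\max\{j,N-1-j\}\}$. Every value up to $\max\{j,N-1-j\}$ is attained on the longer side, and nothing larger occurs. Hence
\[
 \delta_N(j\alpha)=\min_{1\leq q\leq \max\{j,N-1-j\}}\rho(q)=R\bigl(\max\{j,N-1-j\}\bigr).
\]
Distinctness of the orbit points guarantees that $\Orb_N(\alpha)$ has $N$ elements indexed by $j$, so $g_N$ counts values over $j$ with no collapsing issues. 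Since $N\geq2$, the maximum is at least $1$, so $R$ is evaluated at a legitimate argument.

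\emph{First equality.} As $j$ runs over $0,\ldots,N-1$, the quantity $\max\{j,N-1-j\}$ takes exactly the integer values $t$ with $\lceil (N-1)/2\rceil\leq t\leq N-1$. It remains to check that $\lceil (N-1)/2\rceil=\lfloor N/2\rfloor=Q$, by parity of $N$. The set of values of $\delta_N$ is therefore $\{R(t):Q\leq t\leq N-1\}$, and $g_N(\alpha)$ is its cardinality.

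\emph{Second equality.} $R$ is non-increasing, and $R(t)<R(t-1)$ holds exactly when $t=q_n$ for some $n\geq2$. I would verify this from \eqref{eq:records}: the records are precisely the strict running minima, and $q_1=1$ starts the sequence. Any zero return terminates the sequence consistently, since $R$ cannot decrease further. A non-increasing integer-indexed function on $[Q,N-1]$ takes $1+\#\{t\in(Q,N-1]:R(t)<R(t-1)\}$ distinct values, which is $1+\#\{n:Q<q_n\leq N-1\}$. Here $q_1=1$ is excluded automatically when $Q\geq1$.

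The only point needing care is this identification of strict decreases of $R$ with the $q_n$, $n\geq 2$, including termination at a zero return. The rest is bookkeeping.
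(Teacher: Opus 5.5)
Your proposal is correct and follows essentially the same route as the paper. Translation invariance gives $d_X(j\alpha,k\alpha)=\rho(|k-j|)$, the range of $\max\{j,N-1-j\}$ is $[\lfloor N/2\rfloor,N-1]$, and the distinct values of the non-increasing $R$ on that range are one initial value $R(Q)$ plus one for each strict record time in $(Q,N-1]$. (The record sequence can also terminate without a zero return, but your identification of strict decreases of $R$ with the $q_n$, $n\geq2$, covers that case as well.)
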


\begin{proof}
The index differences from $j$ to the other indices are
\[
 -j,\ldots,-1,1,\ldots,N-1-j.
\]
Symmetry and translation invariance give
$d_X(j\alpha,k\alpha)=\rho(|k-j|)$. The absolute values of the displayed
indices fill the integer interval from $1$ to $\max\{j,N-1-j\}$.
This proves \eqref{eq:running}.

As $j$ varies, $\max\{j,N-1-j\}$ takes every integer value from
$\lceil(N-1)/2\rceil=\lfloor N/2\rfloor$ to $N-1$.
This proves the first equality in \eqref{eq:exact-count}.
There is one initial value $R(Q)$, and each strict record time after $Q$
and at most $N-1$ produces exactly one further value. This proves the
second equality, including when the record sequence is finite.
\end{proof}

This is the record-count mechanism behind Chevallier's lemma
\cite{Chevallier1996,ChevallierApproximations1996,Shutov2024}, written with $N$ orbit points instead
of indices $0,\ldots,N$. The identity explicitly handles the midpoint
and avoids a separate parity convention for $N$.

\begin{proof}[Proof of Theorem~\ref{thm:packing} and
Corollaries~\ref{cor:euclidean} and \ref{cor:sup}]
First suppose that the orbit points are distinct. With
$Q=\lfloor N/2\rfloor$ we have $R(Q)>0$ and $N-1\leq2Q$.
Every record time counted in \eqref{eq:exact-count} lies in $(Q,2Q]$
and satisfies $\rho(q_n)<R(Q)$. Corollary~\ref{cor:dyadic}, applied in
the original presentation, therefore gives
\[
    g_N(\alpha)\leq P(V)+1.
\]
By Lemma~\ref{lem:continuous-part}, the same metric quotient has the
reduced presentation
\[
    X=(\overline V\times U)/\overline\Lambda.
\]
Applying the same argument there gives
\[
    g_N(\alpha)\leq P(\overline V)+1.
\]
This proves the first assertion of Theorem~\ref{thm:packing}.

If $V$ is an inner-product space, identify $\overline V$ with $W^\perp$.
Then $\overline\Lambda\subset E_\Lambda\times U$ and
\[
    \overline V=E_\Lambda\oplus E_\Lambda^\perp
\]
orthogonally. The refinement in Corollary~\ref{cor:dyadic} gives
$g_N(\alpha)\leq P(E_\Lambda)+1$, proving the second assertion.

If the orbit segment has repetitions, $\alpha$ has finite order $h<N$,
and the orbit set is its entire finite cyclic subgroup. When $h\geq2$,
translation by its elements acts transitively by isometries, so all
nearest-neighbour distances are equal and $g_N(\alpha)=1$; when $h=1$,
our convention gives $g_N=0$. Thus the theorem holds in all cases.

For Corollary~\ref{cor:euclidean}, use
Lemma~\ref{lem:packing-values} in $E_\Lambda$; when $E_\Lambda=\{0\}$,
$P(E_\Lambda)=1$ by definition. For Corollary~\ref{cor:sup}, the ambient
cubical bound follows from $P(V)=2^d$. If the reduced factor is also
cubical, let $E$ be the coordinate subspace spanned by the $k$
coordinates occurring in the real projection of $\overline\Lambda$ and
let $F$ be the complementary coordinate subspace. Then
$\overline\Lambda\subset E\times U$, the norm is a maximum-product norm
on $E\oplus F$, and $P(E)=2^k$. Remark~\ref{rem:effective-factor} (or
directly the refined dyadic argument) gives the stated sharper bound.
\end{proof}

\subsection{Growth criteria and the packing recurrence}
\label{subsec:growth-proofs}

\begin{proof}[Proof of Theorem~\ref{thm:growth}]
As in the proof of Theorem~\ref{thm:packing}, a complete finite orbit
has $g_N\leq1$. We may therefore assume that the first $N$ iterates
are distinct. Put $Q=\lfloor N/2\rfloor$. If more than $M$ record times
belonged to $(Q,N-1]$, then for some $k$,
\[
 Q<q_k<q_{k+1}<\cdots<q_{k+M}\leq N-1\leq2Q.
\]
This contradicts \eqref{eq:doubling}, since
$q_{k+M}\geq2q_k>2Q$.
The record-count identity \eqref{eq:exact-count} therefore gives
$g_N\leq M+1$. Finally, \eqref{eq:recurrence-M} implies
\eqref{eq:doubling}, because $q_{n+1}>q_n$.
\end{proof}

\begin{lemma}[Recurrences pass to subsequences]\label{lem:subsequence-recurrence}
Let $p_1<p_2<\cdots$ be a finite or infinite increasing sequence and
suppose that, for some $M\geq1$,
\[
 p_{j+M}\geq p_j+p_{j+1}
\]
whenever $p_{j+M}$ is defined. Then every subsequence
$q_n=p_{k_n}$ satisfies
\[
 q_{n+M}\geq q_n+q_{n+1}
\]
whenever $q_{n+M}$ is defined.
\end{lemma}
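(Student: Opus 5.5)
The plan is to use monotonicity of the index map together with the hypothesis applied at a single well-chosen index. Write $q_n=p_{k_n}$ with $k_1<k_2<\cdots$ strictly increasing integers, so that $k_{n+j}\geq k_n+j$ for every $j\geq0$ by induction.

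Suppose $q_{n+M}$ is defined, so $p_{k_{n+M}}$ exists. Since $k_{n+M}\geq k_n+M$ and $p$ is increasing, the term $p_{k_n+M}$ is also defined and satisfies
\[
 q_{n+M}=p_{k_{n+M}}\geq p_{k_n+M}.
\]
Applying the hypothesis with $j=k_n$ gives
\[
 p_{k_n+M}\geq p_{k_n}+p_{k_n+1}.
\]
Next I would compare $p_{k_n+1}$ with $q_{n+1}$. Since $k_{n+1}\geq k_n+1$, we only get $p_{k_n+1}\leq q_{n+1}$, which points the wrong way, so this naive chain does not close. This is the main obstacle.

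The fix is to apply the hypothesis at a shifted index. Set $j=k_{n+1}-1$, which satisfies $j\geq k_n$. We have
\[
 j+M=k_{n+1}-1+M\leq k_{n+M}
\]
because
\[
 k_{n+M}\geq k_{n+1}+(M-1).
\]
So $p_{j+M}$ is defined, and
\[
 q_{n+M}\geq p_{j+M}\geq p_j+p_{j+1}=p_{k_{n+1}-1}+q_{n+1}\geq p_{k_n}+q_{n+1}=q_n+q_{n+1},
\]
where the last inequality uses $k_{n+1}-1\geq k_n$ and monotonicity of $p$.

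The proof therefore reduces to two routine checks. The first is the index inequality $k_{n+M}\geq k_{n+1}+M-1$, which holds since there are $M-1$ strict steps from $k_{n+1}$ to $k_{n+M}$. The second is that all the $p$-terms used are defined, which follows because they have indices at most $k_{n+M}$ in a sequence that is defined on an initial segment of indices. The only real point is choosing the index $j=k_{n+1}-1$ rather than $j=k_n$.
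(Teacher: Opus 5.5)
Your proposal is correct and is essentially the paper's proof: you choose the same index $j=k_{n+1}-1$, verify $j\geq k_n$ and $j+M\leq k_{n+M}$, and chain $q_{n+M}\geq p_{j+M}\geq p_j+p_{j+1}\geq q_n+q_{n+1}$ using $p_{j+1}=q_{n+1}$. The first attempt with $j=k_n$, which you abandon, is harmless and does not affect the final argument.
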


\begin{proof}
Put $j=k_{n+1}-1$. Since the $k_i$ are strictly increasing,
\[
 j\geq k_n,\qquad j+M\leq k_{n+M}.
\]
Hence
\[
 q_{n+M}=p_{k_{n+M}}
 \geq p_{j+M}
 \geq p_j+p_{j+1}
 \geq p_{k_n}+p_{k_{n+1}}
 =q_n+q_{n+1}.
\]
\end{proof}

\begin{proof}[Proof of Proposition~\ref{prop:shulga-real}]
Let $W=\Lambda^\circ$. Lemma~\ref{lem:continuous-part}, with $U=\{0\}$,
identifies $V/\Lambda$ isometrically with $W^\perp/\mathcal L$, where
\[
 \mathcal L=\Lambda\cap W^\perp.
\]
The decomposition $\Lambda=W\oplus\mathcal L$ shows that
$\mathcal L$ is a discrete free abelian subgroup with
\[
 \operatorname{rank}\mathcal L
 =\operatorname{rank}(\Lambda/\Lambda^\circ)=r.
\]
We may therefore replace $(V,\Lambda)$ by $(W^\perp,\mathcal L)$.

Let
\[
 E=\operatorname{span}_{\mathbb R}\mathcal L,
 \qquad F=W^\perp\cap E^\perp.
\]
Then $W^\perp=E\oplus F$, $\dim E=r$, and $\mathcal L$ is a full-rank
lattice in $E$. Write the projected rotation vector as
\[
 \alpha=a+b,\qquad a\in E,\quad b\in F,
\]
and define
\[
 \delta(q)=\dist(qa,\mathcal L).
\]
Orthogonality gives
\begin{equation}\label{eq:real-projection-distance}
 \rho(q)^2=\delta(q)^2+q^2\|b\|^2.
\end{equation}

Every strict best-return denominator for $\rho$ is a strict
best-approximation denominator for $\delta$. Indeed, if $q$ is a strict
record for $\rho$ and some $1\leq p<q$ satisfied
$\delta(p)\leq\delta(q)$, then for $b\ne0$,
\[
 \rho(p)^2
 \leq\delta(q)^2+p^2\|b\|^2
 <\delta(q)^2+q^2\|b\|^2
 =\rho(q)^2,
\]
a contradiction. If $b=0$, the claim is immediate from
$\rho=\delta$.

Assume first that $r\geq1$, and let
\[
 p_1<p_2<\cdots
\]
be the strict best-approximation denominators for the compact flat torus
$E/\mathcal L$. The preceding observation says that the strict
best-return denominators $q_n$ for $V/\Lambda$ form a subsequence of
the $p_j$. Shulga's Theorem~1.4 \cite{Shulga2026} gives, in particular,
\[
 p_{j+2^r}\geq p_j+p_{j+1}.
\]
Lemma~\ref{lem:subsequence-recurrence} therefore yields
\eqref{eq:shulga-real}. The distance bound follows from
Theorem~\ref{thm:growth}.

If $r=0$, then $\mathcal L=\{0\}$ and
\[
 \rho(q)=q\|b\|.
\]
This is either identically zero or strictly increasing, so no strict
record occurs after $q_1=1$. The record-count identity
\eqref{eq:exact-count}, together with the finite-orbit case already
handled in the proof of Theorem~\ref{thm:packing}, gives $g_N\leq1$.
\end{proof}

\begin{proof}[Proof of Proposition~\ref{prop:weak}]
The argument in the original presentation gives the recurrence with
$p=P(V)$. Indeed, if
$q_{n+p+1}\leq q_n+q_{n+1}$, then the $p+1$ distinct times
\[
    q_{n+1},\ldots,q_{n+p+1}
\]
have return distances strictly below $r_n=R(q_n)>0$, and any two differ
by at most $q_n$. Lemma~\ref{lem:separation}, with $Q=q_n$, would give
$p+1\leq p$, a contradiction.

Applying the same argument to the reduced presentation supplied by
Lemma~\ref{lem:continuous-part} gives the recurrence with
$p=P(\overline V)$, proving the first assertion.

If $V$ is an inner-product space, in the reduced presentation one has
$\overline\Lambda\subset E_\Lambda\times U$ and the orthogonal splitting
$\overline V=E_\Lambda\oplus E_\Lambda^\perp$. The refined form of
Lemma~\ref{lem:separation} therefore gives the same contradiction with
$p=P(E_\Lambda)$.
\end{proof}

The same separation and record-count proofs also establish
Remark~\ref{rem:purely-ultrametric}: for $V=\{0\}$ an admissible set
in its open unit ball $\{0\}$ has at most one point, and no volume
argument is needed. The displayed distances on $\Z_2$ prove sharpness.

Since $P(E_\Lambda)=2$ when $m_\Lambda=1$,
Proposition~\ref{prop:weak} proves the one-dimensional assertion of
Theorem~\ref{thm:lowdim}.

\subsection{Using the preceding record}
\label{subsec:archimedean-proof}

\begin{proof}[Proof of Proposition~\ref{prop:archimedean}]
Write $r=r_n$ and $M=P(V)$, and suppose that
\[
 q_{n+M}<q_n+q_{n+1}.
\]
By definition of the next strict record,
\begin{equation}\label{eq:before-next-record}
 1\leq m<q_{n+1}\quad\Longrightarrow\quad \rho(m)\geq r.
\end{equation}
Let $e_0=(v_0,u_0)$ be the representative in
\eqref{eq:archimedean-condition}, so $\|v_0\|_V=r$ and $|u_0|_U<r$.
For $1\leq i\leq M$, choose representatives
\[
 e_i=(v_i,u_i)\quad\text{of }q_{n+i}\alpha,\qquad |e_i|_{\A}<r.
\]
For every $0\leq i<j\leq M$,
\[
 0<q_{n+j}-q_{n+i}\leq q_{n+M}-q_n<q_{n+1}.
\]
The difference $e_j-e_i$ represents this return, and
$|u_j-u_i|_U<r$. It follows from \eqref{eq:before-next-record} that
\begin{equation}\label{eq:boundary-configuration}
 \|v_0\|_V=r,\qquad
 \|v_i\|_V<r\ (i\geq1),\qquad
 \|v_i-v_j\|_V\geq r\ (i\ne j).
\end{equation}
We show that this configuration of $P(V)+1$ vectors is impossible
for either of the norms in the proposition.

\emph{Maximum norm.}
Assign to $v_0$ the signs of its coordinates, assigning a positive
sign to a zero coordinate. For each $v_i$ with $i\geq1$, retain the
sign of every nonzero coordinate, and assign to a zero coordinate
the sign opposite to that of $v_0$ in that coordinate.
There are $2^d$ patterns and $2^d+1$ vectors, so two patterns coincide.

If the two vectors are interior vectors, their entries in each
coordinate have the same weak sign and absolute values less than
$r$. Their maximum-norm distance is therefore less than $r$.
If the pair is $v_0,v_i$, then in a coordinate where $(v_0)_k\ne0$,
the matching pattern forces $(v_i)_k\ne0$ with the same sign. Hence
$|(v_i)_k-(v_0)_k|<r$, since $|(v_i)_k|<r$ and
$|(v_0)_k|\leq r$. In a coordinate where $(v_0)_k=0$, the same
strict bound is immediate. Both cases contradict
\eqref{eq:boundary-configuration}.

\emph{Inner-product norm.}
Here $M=\sk\geq2$. None of the interior vectors can be zero:
its distance to another interior vector would then be less than
$r$. Write $v_i=a_i\xi_i$ with $a_i>0$ and $\|\xi_i\|_V=1$.
Suppose that $\langle\xi_i,\xi_j\rangle\geq1/2$ for some pair,
ordered so that $a_i\leq a_j$. Then
\[
 \|v_i-v_j\|_V^2
 \leq a_i^2+a_j^2-a_i a_j
 =a_j^2-a_i(a_j-a_i).
\]
If $a_j<r$, the last expression is at most $a_j^2<r^2$.
If $a_j=r$, then $a_i<r=a_j$ because only $v_0$ lies on the
boundary, and the last expression is again strictly less than
$r^2$. This contradicts \eqref{eq:boundary-configuration}.
Thus $\xi_0,\ldots,\xi_M$ form a strong kissing configuration of
$\sk+1$ points, which is impossible.
\end{proof}

\subsection{Five points in a disc and the planar recurrence}
\label{subsec:planar}

We identify the Euclidean plane with $\mathbb C$. The main geometric
step concerns five points in the open unit disc with pairwise
distances at least one. We first establish an angular estimate.

\begin{lemma}[Angular estimate]\label{lem:angular}
Let $0<A,B,C\le1$, put
\[
 p=\arccos\frac A2,\qquad q=\arccos\frac B2,\qquad T=\frac{5\pi}3-p-q,
\]
and let $\alpha,\beta$ satisfy
\begin{equation}\label{eq:angular-domain}
 \alpha\ge p,\qquad\beta\ge q,\qquad\alpha+\beta\le T.
\end{equation}
Then
\begin{equation}\label{eq:angular-target}
 \bigl|Ae^{-i\alpha}+Be^{i\beta}-C\bigr|\le1.
\end{equation}
\end{lemma}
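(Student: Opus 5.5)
The first step removes the parameter $C$. For fixed $w=Ae^{-i\alpha}+Be^{i\beta}$ the quantity
\[
 |w-C|^2=|w|^2-2C\,\operatorname{Re}w+C^2
\]
is a convex function of $C$. It therefore suffices to prove \eqref{eq:angular-target} at the endpoints $C\to0$ and $C=1$; the case $C\to 0$ is allowed because the bound is closed. In other words, I will show that $|w|\le1$ and $|w-1|\le1$ hold throughout the domain \eqref{eq:angular-domain}.

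The bound on $|w|$ is immediate. Since $0<A,B\le1$, we have $p,q\in[\pi/3,\pi/2)$, so $p+q\ge 2\pi/3$ and $T\le\pi$. Hence on the domain $\alpha+\beta\in[2\pi/3,\pi]$ and $\cos(\alpha+\beta)\le-1/2$. This gives
\[
 |w|^2=A^2+B^2+2AB\cos(\alpha+\beta)\le A^2+B^2-AB\le\max\{A,B\}^2\le1,
\]
by exactly the computation used in Lemma~\ref{lem:packing-values}.

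For $C=1$ I will write
\[
 \Phi(\alpha,\beta)=|w-1|^2-1=A^2+B^2+2AB\cos(\alpha+\beta)-2A\cos\alpha-2B\cos\beta ,
\]
and show $\Phi\le0$ on the triangle \eqref{eq:angular-domain} (if $T<p+q$ the domain is empty and there is nothing to prove). The key observation is geometric. We have $|Ae^{-i\alpha}-1|^2=1+A^2-2A\cos\alpha$, so the condition $\alpha\ge p$ says precisely that $Ae^{-i\alpha}$ lies at distance at least $1$ from the point $1$, and similarly for $\beta$. The constraint $\alpha+\beta\le T$ says that the angle between $Ae^{-i\alpha}$ and $Be^{i\beta}$, measured through the side away from $1$, is at least $\pi/3+p+q$.

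The proof of $\Phi\le0$ proceeds in three steps.
\begin{itemize}
\item[(i)] \emph{Interior critical points.} I will locate them via
\[
 \sin\alpha=B\sin(\alpha+\beta),\qquad \sin\beta=A\sin(\alpha+\beta),
\]
and show that they are not interior maxima. At such a point the Hessian diagonal entries $2A\cos\alpha-2AB\cos(\alpha+\beta)$ and $2B\cos\beta-2AB\cos(\alpha+\beta)$ should be positive whenever $\alpha,\beta<\pi/2$. The other case is handled by noting that $\cos\alpha\le0$ already makes the $-2A\cos\alpha$ term favourable.
\item[(ii)] \emph{The two legs.} On $\alpha=p$ we have $-2A\cos p=-A^2$, so
\[
 \Phi(p,\beta)=B^2+2AB\cos(p+\beta)-2B\cos\beta .
\]
This is a single trigonometric function of $\beta$ whose sign I will check directly, with endpoint value $2AB\cos(p+q)\le0$ at $\beta=q$. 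The leg $\beta=q$ is symmetric.
\item[(iii)] \emph{The hypotenuse and its endpoints.} On $\alpha+\beta=T$ the function reduces to $-2A\cos\alpha-2B\cos(T-\alpha)$ plus a constant. This is a sinusoid in $\alpha$, so its maximum on the segment is at an endpoint or at an explicitly computable point.
\end{itemize}

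I expect step (iii), the hypotenuse, together with its vertex $(p,T-p)$, to be the main obstacle. There the inequality should be tight, and this is why the constant $5\pi/3$ appears. My first attempt will be to verify the vertex geometrically. With $u=Ae^{-ip}$ we have $|u|=A$ and $|u-1|=1$. The point $Be^{i(T-p)}$ is then obtained by rotating through the complementary angle, and the claim $|u+Be^{i(T-p)}-1|\le1$ should reduce, by the law of cosines in the triangle formed by $0$, $1$ and $u$, to the inequality $\cos(\pi/3+\cdot)\le\cdots$ for an angle that is at most $\pi/3$. If the direct sinusoid analysis along the hypotenuse becomes messy, I will instead parametrize by $\theta=\alpha-p\in[0,T-p-q]$ and compare with the vertex value, using that the amplitude of the sinusoid is controlled by $A,B\le1$.
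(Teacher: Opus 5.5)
Your outline follows the architecture of the paper's proof. Convexity in $C$ reduces the lemma to $C\in\{0,1\}$. For $C=1$, the maximum of $|w-1|^2$ over the triangle is pushed to the boundary and then to the vertices $(p,q)$, $(p,T-p)$, $(T-q,q)$; equality can occur at the last two. Two of your steps are simpler than the paper's:
\begin{itemize}
\item For $C=0$, the observation that $\alpha+\beta\in[2\pi/3,\pi]$ forces $\cos(\alpha+\beta)\le-1/2$ gives $|w|^2\le A^2+B^2-AB\le1$ in one line. The paper instead passes to $x=\pi/2-p$, $y=\pi/2-q$ and a longer trigonometric computation.
\item At the vertex $(p,q)$, the identity $2A\cos p=A^2$ gives $\Phi(p,q)=2AB\cos(p+q)<0$ directly.
\end{itemize}
For the interior you use the Hessian at critical points. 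The paper instead shows that $\partial_\beta H$ has the form $a\sin\beta-b\cos\beta$ with $a>0$, $b\ge0$, so no vertical section has an interior maximum. Either reduction works.

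The $C=1$ part, which carries the content, is only sketched, and one justification is wrong. In (i), when $\cos\alpha\le0$ the term $-2A\cos\alpha$ is nonnegative. That is unfavourable both for $\Phi\le0$ and for positivity of $\partial_{\alpha\alpha}\Phi$, so it cannot handle that case. The correct argument: since $\alpha+\beta\le T\le\pi$, at most one of $\alpha,\beta$ exceeds $\pi/2$, and the diagonal entry belonging to the other is positive because $\cos(\alpha+\beta)<0$. In fact there are no interior critical points at all. On the triangle $\alpha\in[\pi/3,2\pi/3]$, so $\sin\alpha\ge\sqrt3/2\ge B\sin(\alpha+\beta)$, with equality only at boundary points.

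Steps (ii) and (iii) are promises, but they close as follows.
\begin{itemize}
\item \emph{The leg $\alpha=p$.} Your identity $Ae^{-ip}-1=e^{-2ip}$ handles the whole leg at once. Using $B^2=2B\cos q$,
\[
 |Ae^{-ip}+Be^{i\beta}-1|^2=1+2B\bigl(\cos q+\cos(\beta+2p)\bigr).
\]
For $q\le\beta\le T-p$ one has $\beta+2p\in[2p+q,\,5\pi/3-q]\subset[\pi-q,\pi+q]$, and there $\cos\le-\cos q$. This includes the tight vertex $(p,T-p)$, where the paper obtains $1-2\sqrt3B\cos(q-5\pi/6)$. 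The leg $\beta=q$ is symmetric.
\item \emph{The hypotenuse.} The critical point of your sinusoid is a minimum, not a candidate maximum. The derivative $2A\sin\alpha-2B\sin(T-\alpha)$ changes sign at most once on $(0,T)$, and only from negative to positive; for $T=\pi$ it has constant sign. The paper shows this via monotonicity of $\sin\alpha/\sin(T-\alpha)$. Hence only the endpoints matter, and they lie on the legs.
\end{itemize}
With these repairs your argument is complete.
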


\begin{proof}
Note that $p,q\in[\pi/3,\pi/2)$, and that \eqref{eq:angular-domain} is
consistent only if $p+q\le T$, that is $p+q\le5\pi/6$; then
$T\le\pi$. For fixed $A,B,\alpha,\beta$ the square of the left-hand side of
\eqref{eq:angular-target} is a convex quadratic function of $C$, so it
suffices to treat $C=0$ and $C=1$.

\emph{The case $C=0$.} Put $x=\pi/2-p$, $y=\pi/2-q$, $s=x+y$ and
$\delta=x-y$, so that $A=2\sin x$, $B=2\sin y$ and $0<x,y\le\pi/6$. From
$p+q\le5\pi/6$ and $p,q\ge\pi/3$ we get $\pi/6\le s\le\pi/3$, and from
$x,y\le\pi/6$ we get
\begin{equation}\label{eq:delta-bound}
 |\delta|\le\frac\pi3-s.
\end{equation}
Since $S=\alpha+\beta\in[p+q,T]\subset[p+q,\pi]$ and the cosine decreases on
$[0,\pi]$,
\[
 |Ae^{-i\alpha}+Be^{i\beta}|^2=A^2+B^2+2AB\cos S\le A^2+B^2+2AB\cos(p+q).
\]
With $p+q=\pi-s$, the identities $\sin^2x+\sin^2y=1-\cos s\cos\delta$ and
$2\sin x\sin y=\cos\delta-\cos s$ give
\[
 A^2+B^2+2AB\cos(p+q)=4\bigl(1+\cos^2s-2\cos s\cos\delta\bigr).
\]
By \eqref{eq:delta-bound}, $\cos\delta\ge\cos(\pi/3-s)$, and
$2\cos s\cos(\pi/3-s)=\tfrac12+\cos(2s-\pi/3)$, whence
\[
 \tfrac14|Ae^{-i\alpha}+Be^{i\beta}|^2
 \le1+\cos^2s-2\cos s\cos\Bigl(\frac\pi3-s\Bigr)
 =1-\frac{\sqrt3}2\sin(2s)\le\frac14,
\]
because $2s\in[\pi/3,2\pi/3]$. This is \eqref{eq:angular-target} for $C=0$.

\emph{The case $C=1$.} Put $H(\alpha,\beta)=|Ae^{-i\alpha}+Be^{i\beta}-1|^2$
and let $D$ be the triangle defined by \eqref{eq:angular-domain}. Expanding,
\[
 H=A^2+B^2+1+2AB\cos(\alpha+\beta)-2A\cos\alpha-2B\cos\beta .
\]
For fixed $\alpha$,
\[
 \frac1{2B}\frac{\partial H}{\partial\beta}
 =\sin\beta-A\sin(\alpha+\beta)
 =(1-A\cos\alpha)\sin\beta-A\sin\alpha\cos\beta,
\]
with $1-A\cos\alpha>0$ and $A\sin\alpha\ge0$ on $D$. A function
$a\sin\beta-b\cos\beta$ with $a>0$, $b\ge0$ changes sign on $(0,\pi)$ at most
once, from negative to positive. Hence on every vertical section of $D$ the
maximum of $H$ is attained at an endpoint, and it suffices to maximize $H$ on
the edges $\beta=q$ and $\beta=T-\alpha$. On the edge $\beta=q$,
\[
 \frac1{2A}\frac{d}{d\alpha}H(\alpha,q)=(1-B\cos q)\sin\alpha-B\sin q\cos\alpha
\]
has the same form and again admits no interior local maximum. On the edge
$\beta=T-\alpha$,
\[
 \frac12\frac{d}{d\alpha}H(\alpha,T-\alpha)=A\sin\alpha-B\sin(T-\alpha),
\]
and for $0<T<\pi$ the ratio $\sin\alpha/\sin(T-\alpha)$ is strictly
increasing on $(0,T)$, so the derivative vanishes at most once and changes
sign from negative to positive; if $T=\pi$, the derivative is $2(A-B)\sin\alpha$, which has constant sign. Thus
the maximum of $H$ over $D$ is attained at a vertex $(p,q)$, $(p,T-p)$ or
$(T-q,q)$.

At $(p,q)$: since $A=2\cos p$ and $B=2\cos q$, we have $Ae^{-ip}=1+e^{-2ip}$
and $Be^{iq}=1+e^{2iq}$, so with $x,y,s,\delta$ as above
\[
 Ae^{-ip}+Be^{iq}-1=1-e^{2ix}-e^{-2iy}=1-2\cos s\,e^{i\delta},
\]
and $|1-2\cos s\,e^{i\delta}|^2=1+4\cos^2s-4\cos s\cos\delta\le1$ because
$|\delta|\le s$ and $\cos s>0$.

At $(p,T-p)$: $Ae^{-ip}-1=e^{-2ip}$, so
\begin{align*}
 H(p,T-p)&=1+B^2+2B\cos(T+p)=1+B\Bigl(2\cos q+2\cos\Bigl(\frac{5\pi}3-q\Bigr)\Bigr)\\
 &=1-2\sqrt3\,B\cos\Bigl(q-\frac{5\pi}6\Bigr)\le1,
\end{align*}
since $q-5\pi/6\in[-\pi/2,-\pi/3]$. The vertex $(T-q,q)$ is symmetric.
Hence $H\le1$ on $D$, which is \eqref{eq:angular-target} for $C=1$.
\end{proof}

\begin{lemma}[Five points in the open unit disc]\label{lem:five-point}
Let $z_1,\dots,z_5\in\R^2$ satisfy $\|z_i\|_2<1$ and $\|z_i-z_j\|_2\ge1$ for
$i\ne j$. Let $c$ be one of the points and let $a,b$ be its two neighbours in
the cyclic order of the directions from the origin. Then
\begin{equation}\label{eq:five-point-target}
 \|a+b-c\|_2\le\max\{\|a\|_2,\|b\|_2,\|c\|_2\}.
\end{equation}
\end{lemma}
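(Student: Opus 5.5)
We reduce to Lemma~\ref{lem:angular} by normalising and rotating. Put $\mu=\max\{\|a\|_2,\|b\|_2,\|c\|_2\}<1$ and rescale all five points by $1/\mu$. After rescaling, $a,b,c$ lie in the closed unit disc, at least one of them has norm one, and every pairwise distance is at least $1/\mu>1$, in particular at least $1$. Since $\|z_i-z_j\|\ge1$ and all norms are below one, no point is the origin, as in the proof of Lemma~\ref{lem:packing-values}. Rotate so that $c=C>0$ is real, and write
\[
 a=Ae^{-i\alpha},\qquad b=Be^{i\beta},
\]
with $\alpha,\beta>0$ the angular gaps from $c$ to its two cyclic neighbours. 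Then $0<A,B,C\le1$, and \eqref{eq:five-point-target} becomes $|Ae^{-i\alpha}+Be^{i\beta}-C|\le1$. This is exactly \eqref{eq:angular-target}, so it remains to verify \eqref{eq:angular-domain}.

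First, the lower bounds $\alpha\ge p=\arccos(A/2)$ and $\beta\ge q=\arccos(B/2)$. Separation gives $|Ae^{-i\alpha}-C|\ge1$, that is
\[
 A^2+C^2-2AC\cos\alpha\ge1.
\]
Since $C\le1$ and the left side is convex in $C$, we treat this through the endpoint $C=1$. If $\cos\alpha>A/2$, the left side at $C=1$ is $1+A(A-2\cos\alpha)<1$. For intermediate $C$ we use $A^2+C^2-2AC\cos\alpha\le\max(A^2,1+A^2-2A\cos\alpha)$. Because $A^2<1$ strictly (before rescaling the norm was below $1$, and we only need to treat the case where $A$ is not the maximum, or handle equality separately), the inequality forces $\cos\alpha\le A/2$.

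This step needs care. After rescaling, the distance bound is $1/\mu>1$ strictly, so I will run the argument with strict separation $>1$. The convexity bound then gives $1<\max(A^2,1+A^2-2A\cos\alpha)$, and $A\le1$ excludes the first term, which yields $\cos\alpha<A/2$. The bound $\beta\ge q$ follows symmetrically.

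Next, the upper bound $\alpha+\beta\le T=5\pi/3-p-q$. The other two points $z,w$ occupy the remaining angular sector of size $2\pi-\alpha-\beta$, split into three consecutive gaps: $a$ to $z$, $z$ to $w$, and $w$ to $b$. Any two points of norm at most one at distance greater than $1$ subtend an angle greater than $\pi/3$, since an angle at most $\pi/3$ with both norms at most $1$ gives distance at most $\max$ norm $\le1$. The same argument also gives the sharper bound: the angle between $a$ and a point of norm $\le1$ at distance greater than $1$ is at least $\arccos(A/2)=p$, by the computation above applied with $a$ in place of $C$. Hence the gap $a$ to $z$ is at least $p$, the gap $w$ to $b$ is at least $q$, and the gap $z$ to $w$ is at least $\pi/3$. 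Summing,
\[
 2\pi-\alpha-\beta\ge p+q+\pi/3,
\]
which is exactly $\alpha+\beta\le T$. This requires the directions of the five points to be distinct, which the strict $\pi/3$ separation guarantees, so the cyclic order is well defined. With \eqref{eq:angular-domain} established, Lemma~\ref{lem:angular} gives the claim.

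I expect the main obstacle to be the ``gap is at least $\arccos$'' estimate in its asymmetric form: whichever of the two points has larger norm should control the bound. I will first show, using convexity in the second radius, that an angle $\theta<\arccos(A/2)$ with the other radius in $(0,1]$ forces distance at most $1$, carefully handling the case where the other radius exceeds $A$.
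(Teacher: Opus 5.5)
Your route is the paper's: rotate $c$ to the positive real axis, show $\alpha\ge p$, $\beta\ge q$ and $\alpha+\beta\le T$ from the three gaps of the complementary arc, then invoke Lemma~\ref{lem:angular}. The gap is the order of operations. You rescale by $\mu=\max\{\|a\|_2,\|b\|_2,\|c\|_2\}$, which ignores the two remaining points $z,w$, and these can be longer than $\mu$. For example, put $c,b,a$ at radius $0.87$ and $w,z$ at radius $0.99$, at successive angles $72^\circ$; this configuration is admissible. After rescaling, $z$ and $w$ may then lie outside the closed unit disc. Your justifications for the three complementary gaps rely on ``two points of norm at most one at distance greater than $1$'' and on the convexity bound $\max\{A^2,1+A^2-2A\cos\theta\}$. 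Both hold only when the second radius lies in $[0,1]$, so they do not apply to $z$ and $w$. Without a norm bound on the second point the angular estimate is false: a long vector can be at distance $>1$ from $a$ in essentially the same direction. Your lower bounds $\alpha\ge p$, $\beta\ge q$ are fine, since they involve only $a,b,c$.

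The conclusion is still true, but it needs information you discarded: in rescaled coordinates $z,w$ have norm $<1/\mu$ and all separations are $\ge 1/\mu$. The cleanest repair is the paper's ordering. First do every angular estimate in the original scale, where all five points lie in the open unit disc. There, for two points of norms $X,Y<1$ at distance $\ge1$ subtending angle $\theta$, you get $\cos\theta\le (X^2+Y^2-1)/(2XY)<X/2$, because $X^2+Y^2-1-X^2Y=(Y-1)(Y+1-X^2)<0$. By symmetry also $\cos\theta<Y/2$, so your worry about which endpoint controls the bound is not an issue. This gives $\alpha>p$, $\beta>q$ and $\alpha+\beta<\tfrac{5\pi}{3}-p-q$ with the unscaled $p,q$. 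Only then rescale by $M=\max\{A,B,C\}$. Since $p'=\arccos(A'/2)\le p$ and $q'\le q$, all three conditions in \eqref{eq:angular-domain} transfer to the rescaled data in the favourable direction, and Lemma~\ref{lem:angular} finishes the proof.
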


\begin{proof}
No $z_i$ is zero, since otherwise its distance to every other point would be
less than $1$. By Lemma~\ref{lem:packing-values}, the smaller angle between any two directions exceeds $\pi/3$. Thus each successive angular gap exceeds $\pi/3$ and, since there are five, is less than $2\pi/3$. Rotate so that
$c=C>0$ is real, and write $a=Ae^{-i\alpha}$, $b=Be^{i\beta}$ with $A,B>0$,
where $\alpha,\beta>0$ are the angular gaps from $c$ to its two neighbours.
Put $p=\arccos(A/2)$ and $q=\arccos(B/2)$.

From $|a-c|\ge1$ we get $1\le A^2+C^2-2AC\cos\alpha$, hence
\[
 \cos\alpha\le\frac{A^2+C^2-1}{2AC}<\frac A2,
\]
the last inequality because $A^2+C^2-1-A^2C=(C-1)(C+1-A^2)<0$. Thus
$\alpha>p$, and likewise $\beta>q$. The complementary arc from $b$ to $a$
contains the two remaining points and consists of three angular gaps. The gap
adjacent to $b$ exceeds $q$, by the same computation with $b$ in the role of
$c$'s neighbour; the gap adjacent to $a$ exceeds $p$; and the middle gap
exceeds $\pi/3$, because two points of norm less than $1$ at distance at
least $1$ subtend an angle strictly greater than $60^\circ$
(Lemma~\ref{lem:packing-values}). Hence
\begin{equation}\label{eq:angular-budget}
 \alpha+\beta<2\pi-\Bigl(p+\frac\pi3+q\Bigr)=\frac{5\pi}3-p-q .
\end{equation}

Let $M=\max\{A,B,C\}<1$ and put $A'=A/M$, $B'=B/M$, $C'=C/M$, so that
$0<A',B',C'\le1$, and $p'=\arccos(A'/2)\le p$, $q'=\arccos(B'/2)\le q$.
Then $\alpha>p\ge p'$, $\beta>q\ge q'$ and, by \eqref{eq:angular-budget},
$\alpha+\beta<5\pi/3-p-q\le5\pi/3-p'-q'$. Lemma~\ref{lem:angular} applied to
$A',B',C'$ gives $|A'e^{-i\alpha}+B'e^{i\beta}-C'|\le1$; multiplying by $M$
yields \eqref{eq:five-point-target}.
\end{proof}

\begin{proof}[Proof of the planar recurrence in Theorem~\ref{thm:lowdim}]
By Lemma~\ref{lem:continuous-part}, we may first pass to the reduced
presentation. Since $m_\Lambda=2$, we then have the orthogonal splitting
\[
    \overline V=E_\Lambda\oplus E_\Lambda^\perp,
    \qquad \overline\Lambda\subset E_\Lambda\times U.
\]
Thus it suffices to prove the following subspace version: let
$E\subset V$ have dimension two, let $V=E\oplus E^\perp$ orthogonally,
and assume $\Lambda\subset E\times U$.
Suppose, to the contrary, that
\begin{equation}\label{eq:planar-contradiction}
 q_{n+5}\leq q_n+q_{n+1}.
\end{equation}
Put $r=r_n>0$ and fix a representative $a\in\A$ of $\alpha$.
Write the real component of $a$ as $a_E+b$, with $b\in E^\perp$,
and put $c=\|b\|$ and $t_i=q_{n+i}$ for $1\leq i\leq5$.
The strict record property gives
\begin{equation}\label{eq:strict-record-property}
 1\leq m<q_k\quad\Longrightarrow\quad \rho(m)>r_k.
\end{equation}
In particular, the positive number
\[
 \eta=\min_{1\leq i\leq4}
       \bigl(R(q_{n+i}-1)-r_{n+i}\bigr)
\]
is well-defined. Choose
\[
 0<\varepsilon<\min\{\eta,r_n-r_{n+1}\}.
\]
By the definition of the quotient infimum we can select
$\lambda_i\in\Lambda$, $1\leq i\leq5$, such that
\begin{equation}\label{eq:approximate-lifts}
 e_i=q_{n+i}a-\lambda_i=(v_i,u_i),\qquad
 |e_i|_{\A}<r_{n+i}+\varepsilon<r.
\end{equation}
Here $v_i=x_i+t_i b$ with $x_i\in E$, and hence
\begin{equation}\label{eq:planar-projected-lifts}
 \|x_i\|^2+t_i^2c^2<(r_{n+i}+\varepsilon)^2.
\end{equation}
This choice is the only approximation needed in the proof.

If $i<j$, then \eqref{eq:planar-contradiction} gives
\[
 0<q_{n+j}-q_{n+i}\leq q_n.
\]
Thus $\rho(q_{n+j}-q_{n+i})\geq r$. Since $e_j-e_i$ represents this
return and $|u_j-u_i|_U<r$, it follows that
\[
 \|v_j-v_i\|_2\geq r.
\]
As in \eqref{eq:reduced-separation}, with $Q=q_n$, the positive
number $s=\sqrt{r^2-q_n^2c^2}$ satisfies
\[
 \|x_i\|<s,\qquad \|x_i-x_j\|\geq s\quad(i\ne j).
\]
The five vectors $x_i/s$ in $E$ therefore satisfy
Lemma~\ref{lem:five-point}. Apply it with distinguished point $x_5/s$,
and denote its angular neighbours by $x_i/s,x_j/s$, ordering their
indices so that $i<j<5$. We obtain
\[
 \|x_i+x_j-x_5\|
 \leq\max\{\|x_i\|,\|x_j\|,\|x_5\|\}.
\]
For $\ell\in\{i,j,5\}$, the record distances decrease and
$t_\ell\geq t_i$, so \eqref{eq:planar-projected-lifts} implies
\[
 \|x_\ell\|^2<(r_{n+i}+\varepsilon)^2-t_i^2c^2.
\]

The representative $e_i+e_j-e_5$ has denominator
$m=t_i+t_j-t_5$. This is positive: $t_i\geq q_{n+1}$,
$t_j>q_n$, and \eqref{eq:planar-contradiction} imply $m>0$.
Also $m<t_i$, since $t_j<t_5$. Its real component satisfies
\begin{align*}
 \|v_i+v_j-v_5\|^2
 &=\|x_i+x_j-x_5\|^2+m^2c^2\\
 &<(r_{n+i}+\varepsilon)^2-(t_i^2-m^2)c^2
 \leq(r_{n+i}+\varepsilon)^2.
\end{align*}
Its ultrametric component has size less than $r_{n+i}+\varepsilon$
by ultrametricity. Thus $\rho(m)<r_{n+i}+\varepsilon$, whereas
$0<m<q_{n+i}$ gives
\[
 \rho(m)\geq R(q_{n+i}-1)
 \geq r_{n+i}+\eta
 >r_{n+i}+\varepsilon,
\]
a contradiction. We conclude that $q_{n+5}>q_n+q_{n+1}$.
In particular, attainment of the quotient infimum is unnecessary.
\end{proof}

\subsection{Counterexamples to the stronger recurrence}
\label{subsec:examples}

We verify the two examples announced in the introduction. Together
with the preceding proofs they establish the sharpness assertion
in Theorem~\ref{thm:lowdim}.

\paragraph{One real dimension.}
Let
\[
 X=(\R\times\Q_2)/\operatorname{diag}\Z[1/2],
 \qquad
 \alpha=(3/5,0)+\operatorname{diag}\Z[1/2],
\]
with the maximum of the usual real and $2$-adic absolute values.
Then
\begin{equation}\label{eq:solenoid-return}
 \rho(q)=\inf_{\lambda\in\Z[1/2]}
       \max\{|3q/5-\lambda|,|\lambda|_2\}.
\end{equation}
For a non-integral dyadic rational $\lambda$, $|\lambda|_2\geq2$.
An integer nearest to $3q/5$ gives a value at most one, so only
integers need be considered. Integers with $|3q/5-\lambda|>1$ can
also be discarded. The resulting finite calculation gives
\[
 \begin{array}{c|rrrrrrr}
 q&1&2&3&4&5&6&7\\ \hline
 \rho(q)&3/5&4/5&1/2&1/2&1&2/5&1/4\\
 \text{a minimizing }\lambda&0&2&2&2&3&4&4
 \end{array}
\]
and consequently
\[
 q_1=1,\quad q_2=3,\quad q_3=6,\quad q_4=7
 <q_2+q_3=9.
\]
This is the adelic torus corresponding to $\mathcal P=\{2\}$ in
\cite{DasHaynes2023}. The diagonal subgroup is closed by the separation
argument in Section~\ref{subsec:bounds}. The point $\alpha$ has infinite order: if $(3k/5,0)=(\lambda,\lambda)$,
the second coordinate forces $\lambda=0$ and then $k=0$.

\paragraph{Two real dimensions.}
Let $U=\Z/256\Z$, with
\[
 |\bar m|_U=
 \begin{cases}
  0,&\bar m=0,\\
  2^{-\nu_2(m)},&\bar m\ne0.
 \end{cases}
\]
Here $\nu_2(m)\in\{0,\ldots,7\}$ is independent of the chosen
representative of a nonzero residue class. This is a
translation-invariant ultrametric. Take
\begin{equation}\label{eq:planar-example}
 X=(\R^2\times U)/(\Z^2\times\{0\}),\qquad
 \alpha=\bigl((9/20,8/13),\bar1\bigr)+(\Z^2\times\{0\}).
\end{equation}
For $1\leq q<256$,
\begin{equation}\label{eq:planar-return}
 \rho(q)=\max\{D(q),2^{-\nu_2(q)}\},\qquad
 D(q)^2=\frac{a(q)^2}{400}+\frac{b(q)^2}{169},
\end{equation}
where $a(q)$ and $b(q)$ are the least absolute residues of $9q$
modulo $20$ and of $8q$ modulo $13$, respectively.
The candidate records and their errors are
\[
\begin{array}{c|ccccc}
q&44&80&96&104&120\\ \hline
q(9/20,8/13)\bmod\Z^2
 &(-1/5,1/13)&(0,3/13)&(1/5,1/13)&(-1/5,0)&(0,-2/13)\\
2^{-\nu_2(q)}&1/4&1/16&1/32&1/8&1/8\\
\rho(q)&1/4&3/13&\sqrt{194}/65&1/5&2/13
\end{array}
\]
The return distances in the last row are strictly decreasing.
To check that these are consecutive records, first note that a return
of size at most $1/4$ must have $4\mid q$. After $q=44$, an improvement
must have $8\mid q$. All remaining candidates are covered by the
following exact calculations using \eqref{eq:planar-return}:
\begin{center}
\begin{tabular}{@{}ccc@{}}
\toprule
Interval & Candidates & Minimum of $D(q)^2$\\
\midrule
$1\leq q<44$ & $4,8,12,16,20,24,28,32,36,40$ & $269/4225>1/16$\\
$44<q<80$ & $48,56,64,72$ & $794/4225>1/16$\\
$80<q<96$ & $88$ & $776/4225>9/169$\\
$96<q<104$ & none & ---\\
$104<q<120$ & $112$ & $701/4225>1/25$\\
\bottomrule
\end{tabular}
\end{center}
Thus $44,80,96,104,120$ are consecutive strict best-return
denominators. If $q_n=44$, then
\[
 q_{n+4}=120<44+80=q_n+q_{n+1},
\]
which proves the required failure of the four-step recurrence.
This example already has a direct-product quotient and a finite
ultrametric factor.

At the critical indices of the two examples, minimizing representatives
can be chosen with
\[
 \begin{array}{c|cc}
 &\text{real component norm}&\text{ultrametric component norm}\\ \hline
 q_2=3\text{ in the solenoid}&1/5&1/2=r_2\\
 q_n=44\text{ in the planar example}&\sqrt{194}/65&1/4=r_n.
 \end{array}
\]
Thus these representatives do not satisfy the strict condition in
Proposition~\ref{prop:archimedean}. More generally, their failed
recurrences illustrate that the preceding record cannot always be
used as an additional separated real vector.

\subsection{Further questions}\label{subsec:questions}

The first open question is whether every mixed quotient with a
Euclidean plane as its real factor satisfies $g_N\leq5$.
Corollary~\ref{cor:euclidean} gives $6$, so equivalently one can ask
whether a mixed example with six distances exists. The failed
four-step recurrence excludes a direct extension of that denominator
argument; it does not exclude a five-distance theorem.

For arbitrary mixed quotients with a three-dimensional Euclidean
factor, does $q_{n+9}>q_n+q_{n+1}$ always hold?
The cases $m_\Lambda=1,2$ are covered by
Theorem~\ref{thm:lowdim}, while $m_\Lambda=0$ follows from
Proposition~\ref{prop:weak}; the remaining case is $m_\Lambda=3$. Proposition~\ref{prop:weak} gives the index $13$, while the proposed
index $9$ would extend the low-dimensional mixed recurrence theorem.
(The purely real case already satisfies the stronger eight-step
non-strict recurrence by Proposition~\ref{prop:shulga-real}.)
By Theorem~\ref{thm:growth}, the mixed nine-step recurrence would also
yield $g_N\leq10$. The stronger distance bound $g_N\leq9$ is a separate
question.
For any effective dimension $m$ with
$\sigma_m^{>}\leq2^m$, the recurrence with index $2^m+1$ already follows
from Proposition~\ref{prop:weak}; hence the remaining difficulty concerns
the smaller effective dimensions where this packing comparison is
insufficient.

\end{document}